\documentclass[11pt]{article}
\usepackage[a4paper,margin=28mm]{geometry}
\usepackage{amsmath,amssymb,amsthm,aliascnt,mathtools}
\usepackage{xcolor}
\usepackage{microtype}
\AtBeginDocument{\microtypesetup{activate=false}}
\usepackage{hyperref}
\hypersetup{colorlinks=true,linkcolor=cyan,citecolor=red,urlcolor=red,filecolor=red,
pdftitle={Real-rooted Ehrhart h-star-polynomials of crown posets and wheel and whirl matroids},pdfauthor={Ying Cao and Houshan Fu}}
\numberwithin{equation}{section}

\newtheorem{theorem}{Theorem}[section]
\newaliascnt{lemma}{theorem}
\newtheorem{lemma}[lemma]{Lemma}
\aliascntresetthe{lemma}
\newaliascnt{proposition}{theorem}

\aliascntresetthe{proposition}
\newaliascnt{corollary}{theorem}
\newtheorem{corollary}[corollary]{Corollary}
\aliascntresetthe{corollary}
\theoremstyle{remark}
\newaliascnt{remark}{theorem}
\newtheorem{remark}[remark]{Remark}
\aliascntresetthe{remark}

\DeclareMathOperator{\conv}{conv}
\DeclareMathOperator{\tr}{tr}
\DeclareMathOperator{\ehr}{ehr}
\newcommand{\R}{\mathbb R}
\newcommand{\Z}{\mathbb Z}
\newcommand{\OO}{\mathcal O}
\newcommand{\CC}{\mathcal C}
\newcommand{\one}{\mathbf 1}
\allowdisplaybreaks[1]

\begin{document}
\begin{center}
{\Large\bfseries Real-rooted Ehrhart $h^*$-polynomials of crown posets\par
and wheel and whirl matroids\par}
\end{center}

\begin{center}
Ying Cao$^1$\quad\quad Houshan Fu$^{2,*}$\par\vspace{.2cm}

$^1$Department of Mathematics, Hong Kong University of Science and Technology\\
Clear Water Bay, Hong Kong\\\par
\vspace{.2cm}
$^2$School of Mathematics and Information Science, Guangzhou University\\
Guangzhou 510006, Guangdong, P. R. China\par
\vspace{.2cm}
$^*$Correspondence to be sent to: fuhoushan@gzhu.edu.cn \\
Email: ycaobf@connect.ust.hk
\end{center}

\begin{abstract}
We prove that the Ehrhart $h^*$-polynomials of crown order polytopes
are real-rooted, settling a conjecture of Lundstr\"om and Saud Maia Leite.
For each integer $r\ge3$, the crown order polytope on $2r$ elements
and the rank-$r$ whirl base polytope have the same $h^*$-polynomial.
The $h^*$-polynomials of the rank-$r$ wheel and whirl base polytopes,
as well as every convex combination of the two, have $2r-2$ distinct
negative roots. Their coefficient sequences are positive, unimodal,
and strictly ultra log-concave of order $2r-2$.

\noindent\textbf{Keywords:} Ehrhart polynomial; crown poset; wheel matroid;
whirl matroid; real-rootedness; ultra log-concavity.

\noindent\textbf{Mathematics Subject Classification (2020):} 05A15, 05B35, 06A07, 52B20.
\end{abstract}

\section{Introduction}\label{sec:introduction}

For a lattice polytope $P\subset\R^n$ of dimension $d$, the \emph{Ehrhart polynomial}
$\ehr(P,q):=|qP\cap\Z^n|$ counts the lattice points in its nonnegative
integer dilates. The \emph{Ehrhart series} has the form
\begin{equation}\label{eq:ehrhart-series}
 \sum_{q\ge0}\ehr(P,q)t^q=\frac{h_P^*(t)}{(1-t)^{d+1}}.
\end{equation}
The numerator $h_P^*(t)$, called the \emph{Ehrhart $h^*$-polynomial} of $P$,
has nonnegative integer coefficients
\cite[Theorem~2.1]{StanleyNonnegative}. We study its real-rootedness for crown order
polytopes and for wheel and whirl base polytopes.

Order polytopes relate lattice-point enumeration to order-preserving maps and linear extensions \cite[Theorem~4.1 and Section~5]{StanleyPosets}. For an integer $r\ge2$, the \emph{crown poset} $\CC_{2r}$ is the poset on $[2r]:=\{1,\ldots,2r\}$ with cover relations
\[
 1\prec2\succ3\prec\cdots\succ(2r-1)\prec2r\succ1.
\]
Its \emph{order polytope} is
\begin{equation}\label{eq:crown-polytope}
 \OO(\CC_{2r}):=\{x\in[0,1]^{2r}:x_{2i-1}\le x_{2i},\ x_{2i+1}\le x_{2i}\text{ for }1\le i\le r\},
\end{equation}
where $x_{2r+1}=x_1$.

Lundstr\"om and Saud Maia Leite studied the face and Ehrhart enumerators
of crown order polytopes \cite{Crown} and determined the degree $2r-2$ of
$h^*_{\OO(\CC_{2r})}$ \cite[Corollary~5.13]{Crown}.
By applying  \cite[Theorem~4.2]{BrandenSignGraded}
for sign-graded posets, they also deduced
$\gamma$-nonnegativity of $h^*_{\OO(\CC_{2r})}$
\cite[Theorem~5.19]{Crown}.
They conjectured its real-rootedness \cite[Conjecture~5.18]{Crown}.
For~$r=2$, we have $h^*_{\OO(\CC_4)}(t)=(1+t)^2$; see
\eqref{eq:crown-rank-two}. Our first result settles the remaining cases and establishes simplicity of the roots.

\begin{theorem}\label{thm:crown}
For every integer $r\ge3$, the polynomial
$h^*_{\OO(\CC_{2r})}(t)$ has degree $2r-2$, and all its roots
are simple and negative.
\end{theorem}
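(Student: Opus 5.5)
The plan is to find an explicit formula for $h^*_{\OO(\CC_{2r})}(t)$ via a transfer-matrix computation of the Ehrhart polynomial, and then to prove real-rootedness by exhibiting an interlacing (Sturm-type) sequence.

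\textbf{Step 1: a trace formula for the Ehrhart polynomial.} Lattice points of $q\,\OO(\CC_{2r})$ are integer labelings $x\in\{0,\dots,q\}^{2r}$ with $x_{2i\pm1}\le x_{2i}$. Summing over the peaks $x_{2i}$ first shows that the count is $\tr(A_q^r)$. Here $A_q=L_qU_q$ is a $(q+1)\times(q+1)$ matrix with $L_q$ lower triangular of ones and $U_q=L_q^{T}$; explicitly $(A_q)_{ab}=q+1-\max(a,b)$. Consequently $\ehr(\OO(\CC_{2r}),q)=\sum_j\lambda_j^r$, where $\lambda_j$ runs over the eigenvalues of $A_q$. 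The inverse of $A_q$ is a tridiagonal second-difference matrix, so these eigenvalues have the closed form
\[
\lambda_j=\frac{1}{4\sin^2\bigl(\tfrac{(2j-1)\pi}{2(2q+3)}\bigr)},\qquad j=1,\dots,q+1,
\]
up to checking the boundary conditions. Summing $\sum_q t^q\,\tr(A_q^r)$ then gives a generating function. Equivalently, one can use a generating function in two variables: $\sum_{r}u^{r}\tr(A_q^r)$ is a rational function in the Chebyshev polynomials $U_{q}$ evaluated at a variable depending on $u$. Extracting the coefficient of $u^r$ and multiplying by $(1-t)^{2r+1}$ should produce $h^*$ as a polynomial of degree $2r-2$. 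This degree is already known from \cite[Corollary~5.13]{Crown}, so the real content is the location of the roots.

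\textbf{Step 2: using symmetry and $\gamma$-nonnegativity.} Since $\CC_{2r}$ is graded, $\OO(\CC_{2r})$ is Gorenstein, so $h^*$ is palindromic of degree $2r-2$. Write $h^*(t)=t^{r-1}g\bigl(t+t^{-1}+2\bigr)$, with $g$ of degree $r-1$; this $g$ is the $\gamma$-polynomial. All roots of $h^*$ are simple and negative exactly when $g$ has $r-1$ simple roots in $(0,4)$, because each root $s\in(0,4)$ of $g$ pulls back to two distinct negative roots of $h^*$. I would aim to identify $g$, via the trigonometric eigenvalue form, with something like a Chebyshev-type or trigonometric sum. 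For example, $g(s)$ might be proportional to a finite sum $\sum_k c_k(\theta)$ with $s=4\cos^2\theta$. One then locates $r-1$ sign changes by evaluating at explicit angles $\theta=\pi k/(2r-1)$ or similar.

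\textbf{Step 3: the sign changes.} This is the main obstacle. The first approach is the intermediate value theorem at explicit nodes: if evaluating $g$ at $r$ interlacing points yields alternating signs, then $g$ has $r-1$ simple roots in $(0,4)$. Should the closed form prove unwieldy, the fallback is to derive a three-term recurrence in $r$ for the sequence $h^*_{\OO(\CC_{2r})}$ together with an auxiliary companion sequence. The eigenvalue expansion gives a linear recurrence, and the second sequence would be the $h^*$ of the whirl polytope, as suggested by the abstract. From such a recurrence I would prove by induction that consecutive members interlace, in the manner of an orthogonal-polynomial Sturm sequence; interlacing forces simple, real, negative roots. Negativity itself is automatic, because the coefficients are positive (the $h^*$-vector is positive by $\gamma$-nonnegativity plus a nonzero constant term). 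The delicate part is verifying the sign conditions on the recurrence coefficients for all $t<0$. Small cases with $r=3,4$ will be checked directly as the base of the induction.
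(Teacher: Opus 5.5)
Your Step~1 is correct and is the paper's transfer-matrix formula (Lemma~\ref{lem:spectral-count}), with the same eigenvalues. Your $L_qL_q^{T}$ has entries $\min(a,b)+1$, which is the index reversal of $q+1-\max(a,b)$; this is harmless. The palindromicity in Step~2 is also right; the paper derives it from $\ehr(\OO(\CC_{2r}),-3-q)=\ehr(\OO(\CC_{2r}),q)$.

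However, your reduction to $g$ aims at the wrong region. Since $t+t^{-1}+2=(1+t)^2/t$, a negative real $t\ne-1$ gives $s<0$, while $s\in(0,4)$ corresponds to $t$ on the unit circle. So you need $r-1$ simple roots of $g$ in $(-\infty,0)$, not in $(0,4)$. Moreover, $g(s)=\sum_i\gamma_is^{r-1-i}$ with $\gamma_i\ge0$ and $\gamma_0=1$ is strictly positive on $(0,\infty)$. Hence nodes of the form $s=4\cos^2\theta$ can never show a sign change. For $r=3$, $h^*=t^4+11t^3+24t^2+11t+1$ gives $g(s)=s^2+7s+4$, with roots $(-7\pm\sqrt{33})/2<0$. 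The nodes must lie at $t\in(-1,0)$. The paper does exactly this: it finds $0>t_1>\cdots>t_{r-1}=-1$ with $(-1)^jh^*(t_j)>0$, and palindromicity then gives the other $r-1$ roots in $(-\infty,-1)$.

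The real gap is Step~3. You flag it as the main obstacle but give no mechanism: no nodes are specified, and no estimate establishes the alternating signs. This is where the entire difficulty lies, and the paper needs two separate quantitative arguments.

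\textbf{Case $r\ge(j+1)(j+2)$.} The paper takes $t_j=-x$ with $x=((j+1)/(j+2))^{2r}$. It shows that the alternating series $\sum_q(-1)^q\ehr(\OO(\CC_{2r}),q)x^q=h^*(-x)/(1+x)^{2r+1}$ is dominated by its $j$th term. To do so it replaces each count by $\Lambda_q^r$, the top eigenvalue to the $r$th power, with relative error below $2\cdot3^{-r}$, and uses log-concavity of $\Lambda_q^rx^q$.

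\textbf{Remaining $j$.} Here the paper uses evenness of the Ehrhart polynomial in $q+3/2$ to write $h^*(-e^{-s})$ through even-order derivatives of $1/(2\cosh(s/2))$. It expands these over the poles $\pm\mathrm{i}(2k+1)\pi$ and chooses $s=\pi\cot(\pi(j+3/2)/(2r+1))$, so that the top-order term of the nearest pole pair has sign $(-1)^j$. It must then prove explicit bounds on the normalized coefficients $\widehat c_{r,i}$, the distant poles, and the constant term.

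Your fallback does not fill this gap. The eigenvalue expansion gives, for each fixed $q$, a recurrence in $r$ of order $q+1$. That is not a fixed three-term recurrence for the polynomials $h^*_{\OO(\CC_{2r})}$. Nor can the whirl $h^*$-polynomial serve as an independent interlacing companion: it coincides with the crown polynomial (Theorem~\ref{thm:wheel-whirl}, as the abstract states). As it stands, your proposal establishes the degree and symmetry but not real-rootedness.
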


Omitting the cover relation $1\prec2r$ gives the zig-zag poset on $[2r]$.
Coons and Sullivant interpreted the
$h^*$-polynomials of zig-zag order polytopes through a swap statistic on
alternating permutations and proved symmetry and unimodality
\cite{CoonsSullivant}. Petersen and Zhuang applied Br\"and\'en's
$\gamma$-nonnegativity theorem \cite[Theorem~4.2]{BrandenSignGraded}
for sign-graded posets to the associated zig-zag Eulerian polynomials
\cite[Section~2.4]{PetersenZhuang}.
Alexandersson proved that zig-zag order polytopes have
$h^*$-polynomials with only simple negative roots
\cite[Theorem~7.1]{AlexanderssonEulerian}.

The crown $h^*$-polynomials also occur for matroid base polytopes.
For a matroid $M$ on a finite ground set $E$, its \emph{base polytope} is
\[
 P(M):=\conv\{\one_B\in\R^E:B\text{ is a basis of }M\},
\]
where $\one_B$ is the indicator vector of $B$.
De Loera, Haws, and K\"oppe proposed unimodality of $h^*_{P(M)}$
\cite[Conjecture~2(A)]{DLHK}. Subsequently, Ferroni conjectured real-rootedness
\cite[Conjecture~1.3]{FerroniMinimal}. However, counterexamples include
rank-two matroids and, by duality, cycle matroids of theta graphs
\cite[Theorem~1.1]{Fu}. 

For an integer $r\ge3$, let $W_r$ be the \emph{wheel graph} obtained
from a cycle on $r$ vertices by adjoining a new vertex, called the
\emph{hub}, adjacent to every vertex of the cycle. The original cycle
is called the \emph{rim}. The cycle matroid $M_r:=M(W_r)$ has rank $r$,
and its bases are the edge sets of the spanning trees of $W_r$.
The rim edge set is a circuit-hyperplane of $M_r$.
The rank-$r$ \emph{whirl} $N_r$ is the matroid on $E(W_r)$ whose bases
are those of $M_r$ together with the rim edge set.
Ferroni's formulas for minimal matroids and circuit-hyperplane relaxation
\cite[Corollaries~1.7 and~5.8]{FerroniMinimal} give
\begin{equation}\label{eq:wheel-whirl-difference}
 h^*_{P(N_r)}(t)-h^*_{P(M_r)}(t)
 =t\sum_{i=0}^{r-1}\binom{r-1}{i}^{\!2}t^i.
\end{equation}
The next theorem identifies the whirl numerator with the crown numerator and establishes real-rootedness throughout the wheel--whirl segment.

\begin{theorem}\label{thm:wheel-whirl}
For every integer $r\ge3$,
\begin{equation}\label{eq:crown-whirl}
 h^*_{P(N_r)}(t)=h^*_{\OO(\CC_{2r})}(t).
\end{equation}
Moreover, for every $\lambda\in[0,1]$, the polynomial
\begin{equation}\label{eq:interpolation}
 h_{r,\lambda}(t)
 :=(1-\lambda)h^*_{P(N_r)}(t)+\lambda h^*_{P(M_r)}(t)
\end{equation}
has degree $2r-2$, and all its roots are simple and negative.
\end{theorem}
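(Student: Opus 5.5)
The plan has two independent parts: prove the identity \eqref{eq:crown-whirl} by comparing explicit formulas, then prove real-rootedness along the segment \eqref{eq:interpolation} by an interlacing argument. Since $\lambda=0$ gives $h_{r,0}=h^*_{P(N_r)}$, the second part together with \eqref{eq:crown-whirl} also yields Theorem~\ref{thm:crown}.

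For \eqref{eq:crown-whirl}, I would compute both Ehrhart polynomials in closed form. On the crown side, a point of $q\OO(\CC_{2r})\cap\Z^{2r}$ is a cyclic sequence in which each even entry dominates its two odd neighbours. Fix the odd values $a_1,\dots,a_r\in\{0,\dots,q\}$. The number of choices for $x_{2i}$ is then $q+1-\max(a_i,a_{i+1})$, so
\[
 \ehr(\OO(\CC_{2r}),q)=\tr\bigl(T_q^{\,r}\bigr),\qquad (T_q)_{ab}=q+1-\max(a,b),
\]
where $T_q$ is a transfer matrix of size $(q+1)\times(q+1)$. On the whirl side, I would use Ferroni's relaxation formula: the Ehrhart polynomial of the whirl equals that of the wheel plus an explicit correction. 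For the wheel, I would use a transfer-matrix description of lattice points in $qP(M_r)$, built from the rim/spoke structure and the cyclic symmetry of $W_r$. Both sides are traces of $r$-th powers of cyclic transfer operators, so I expect they agree after a conjugation or a common generating-function identity. The fallback is to compare the series $\sum_r \ehr(\cdot,q)\,z^r$, each of which is rational in $z$ via $\det(I-zT_q)$. Once the Ehrhart polynomials agree, so do the $h^*$-polynomials, because both polytopes have dimension $2r-1$.

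For the roots, write $f:=h^*_{P(M_r)}$ and $g:=t\sum_{i}\binom{r-1}{i}^2t^i$, so that $h_{r,\lambda}=f+(1-\lambda)g$. If $f$ and $g$ both have nonnegative coefficients and only real roots, and they interlace appropriately, then Obreschkoff's theorem gives real-rootedness of every positive combination. That would settle the whole segment at once. The roots of $g$ are $0$ together with the $r-1$ simple negative roots of the Narayana-type polynomial $\sum_i\binom{r-1}{i}^2t^i$, which is a Legendre polynomial in disguise. Since $\deg f=2r-2$ while $g$ has only $r$ roots, direct interlacing is impossible, and the plan has to be adapted.

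The adaptation I would try first is to find a third polynomial $p$ of degree $2r-2$ such that both $f$ and $f+g$ interlace $p$, or such that both are \emph{compatible} in the Chudnovsky--Seymour sense; convex combinations then stay real-rooted. A natural candidate is a recurrence in $r$: the transfer-matrix traces should satisfy a linear recurrence in $r$ whose coefficients are polynomials in $t$. From it I would aim to show that $h^*_{P(N_r)}$ and $h^*_{P(M_r)}$ are common interlacers of $h^*_{P(N_{r-1})}$-type polynomials multiplied by suitable quadratic factors. Strict interlacing would then give simplicity of the roots, and negativity follows from positivity of the coefficients. The degree claim holds for every $\lambda$ because $g$ has degree $r\le 2r-2$ and the leading coefficient of $f$ is positive. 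Finding the right interlacing family, meaning a sequence preserved by the recurrence and including both endpoints, is the step I expect to be the main obstacle. If it resists, I would verify sign alternation of $f$ and $f+g$ at a common set of $2r-3$ test points, perhaps the roots of an auxiliary polynomial. Such a common sign pattern forces simple negative roots for every convex combination by the intermediate value theorem.
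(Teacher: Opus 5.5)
There are genuine gaps in both parts. For \eqref{eq:crown-whirl}, the intermediate equality you aim at is false. $\OO(\CC_{2r})$ is full-dimensional in $\R^{2r}$: the point with odd coordinates $1/3$ and even coordinates $2/3$ satisfies every inequality strictly. So $\ehr(\OO(\CC_{2r}),q)=\tr(T_q^{\,r})$ has degree $2r$, whereas $\dim P(N_r)=2r-1$, and the two Ehrhart polynomials cannot agree. Already for $r=3$, $q=1$ you get $\tr(T_1^{\,3})=18$, while $P(N_3)$ has $16+1=17$ lattice points (the spanning trees of $K_4$ and the rim).

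What holds is $\ehr(P(N_r),q)=\ehr(\OO(\CC_{2r}),q)-\ehr(\OO(\CC_{2r}),q-1)$. The Ehrhart series therefore differ by a factor $1-t$, which absorbs the extra power of $1-t$ in the crown denominator; this step also needs $\ehr(\OO(\CC_{2r}),-1)=0$. Your wheel transfer matrix is never specified, and Ferroni's relaxation formula only moves the problem to the unknown wheel count. The missing idea is a concrete projection. The paper uses the affine map $y(f_i)=x_{2i}-x_{2i-1}$, $y(e_i)=q+x_{2i+1}-x_{2i}$. It sends the $0/1$ points of $\OO(\CC_{2r})$ exactly onto the basis indicators of $N_r$, with constant vectors going to $\one_R$, and its linear part has kernel $\R(1,\ldots,1)$. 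It restricts to a lattice-point bijection on the slice $\min_jx_j=0$. The remaining lattice points, those with $\min_jx_j\ge1$, are carried onto $(q-1)\OO(\CC_{2r})\cap\Z^{2r}$ by subtracting one from every coordinate.

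For the roots, the Obreschkoff/common-interlacer plan offers no candidate interlacer or recurrence, as you acknowledge, so it does not yet prove anything. Your fallback, a common sign pattern at $2r-3$ points, is the paper's strategy. However, choosing the points and proving the signs is the entire difficulty, and the proposal supplies neither. Two ingredients are missing.

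First, you need the reduction to $[-1,0]$. Reciprocity $t^{2r-2}h_{r,0}(1/t)=h_{r,0}(t)$ follows from the reflection $\ehr(\OO(\CC_{2r}),-3-q)=\ehr(\OO(\CC_{2r}),q)$. It also gives $\deg h_{r,0}=2r-2$, which your degree argument silently assumes. Together with palindromicity of $B_{r-1}$, it yields $t^{2r-2}h_{r,\lambda}(1/t)=h_{r,0}(t)-\lambda t^{r-2}B_{r-1}(t)$. Since $|t|^{r-2}\le|t|$ on $[-1,0]$, it then suffices to find points $0>t_1>\cdots>t_{r-1}=-1$ with $(-1)^jh_{r,0}(t_j)>|t_jB_{r-1}(t_j)|$.

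Second, you need an actual proof of these inequalities, which in the paper is quantitative. For $r\ge(j+1)(j+2)$, take $x=((j+1)/(j+2))^{2r}$; the largest eigenvalue $\Lambda_q$ of $T_q$ then makes the $j$th term dominate the alternating series $\sum_q(-1)^q\ehr(\OO(\CC_{2r}),q)x^q$. In the remaining range, $h_{r,0}(-e^{-s})$ is written through derivatives of $1/(2\cosh(s/2))$. The choice $s=\pi\cot(\pi(j+3/2)/(2r+1))$ fixes the phase of the nearest conjugate poles $\pm\mathrm{i}\pi$, so their contribution has sign $(-1)^j$. It then dominates the farther poles, the constant term, and the wheel correction, the last controlled by $|B_k(-x)|\le(1+x)^k$. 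Without some such estimate, no root is located.
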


The lattice-point bijection in \autoref{thm:correspondence} yields
\eqref{eq:crown-whirl}. For $0<\lambda<1$, however, $h_{r,\lambda}$ is not
the $h^*$-polynomial of any lattice polytope. Indeed,
\eqref{eq:wheel-whirl-difference} shows that its coefficient of $t$
is an integer minus $\lambda$.

\autoref{thm:wheel-whirl} also yields strict coefficient inequalities.
A nonnegative sequence $(a_i)_{i=0}^m$ is
\emph{log-concave} if $a_i^2\ge a_{i-1}a_{i+1}$ for $1\le i<m$.
It is \emph{ultra log-concave of order $m$} if it has no internal zeros and
$(a_i/\binom mi)_{i=0}^m$ is log-concave.
For a positive sequence, \emph{strict ultra log-concavity} requires
strict inequalities at every internal index.
By Newton's inequalities \cite[Theorem~2]{StanleyLogConcavity},
a polynomial of degree $m$ with positive leading coefficient and only
simple negative roots has a positive, strictly ultra log-concave
coefficient sequence of order $m$. Consequently, the sequence is unimodal.

\begin{corollary}\label{cor:strict-ulc}
Let $r\ge3$ be an integer and $\lambda\in[0,1]$. Write
$h_{r,\lambda}(t)=\sum_{i=0}^{2r-2}h_i(\lambda)t^i$.
Then $h_i(\lambda)>0$ for $0\le i\le2r-2$, and
\[
 \Bigl(\frac{h_i(\lambda)}{\binom{2r-2}{i}}\Bigr)^2
 >\frac{h_{i-1}(\lambda)}{\binom{2r-2}{i-1}}
  \frac{h_{i+1}(\lambda)}{\binom{2r-2}{i+1}}
 \qquad 1\le i\le2r-3.
\]
\end{corollary}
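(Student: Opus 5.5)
The corollary should follow directly from \autoref{thm:wheel-whirl} together with Newton's inequalities, so the plan is to verify the hypotheses carefully and then apply Newton in its strict form.

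First, fix $r\ge3$ and $\lambda\in[0,1]$. By \autoref{thm:wheel-whirl}, $h_{r,\lambda}$ has degree exactly $m:=2r-2$ and factors as
\[
h_{r,\lambda}(t)=c\prod_{j=1}^{m}(t+\rho_j),
\]
with pairwise distinct $\rho_j>0$. To see that $c>0$, note that $h_{r,\lambda}$ is a convex combination of two $h^*$-polynomials. Each of these has nonnegative coefficients and constant term $1$ by \cite[Theorem~2.1]{StanleyNonnegative}, so $h_0(\lambda)=1$. Hence $c\prod_j\rho_j=1>0$, which forces $c>0$.

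Next, expanding the product gives $h_i(\lambda)=c\,e_{m-i}(\rho_1,\dots,\rho_m)$, where $e_k$ denotes the elementary symmetric polynomial. Every $e_k$ of positive reals is positive, so $h_i(\lambda)>0$ for $0\le i\le m$. In particular, the sequence has no internal zeros.

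For the strict inequality, I will invoke Newton's inequalities in the form
\[
\Bigl(\frac{e_k}{\binom mk}\Bigr)^2\ge\frac{e_{k-1}}{\binom m{k-1}}\frac{e_{k+1}}{\binom m{k+1}},
\]
with equality only when all the $\rho_j$ coincide \cite[Theorem~2]{StanleyLogConcavity}. Since $m\ge4$ and the roots are simple, they are not all equal, so the inequality is strict for every $1\le k\le m-1$.

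Substituting $k=m-i$ and using $\binom m{m-i}=\binom mi$ gives the displayed inequality for each $1\le i\le 2r-3$. The factor $c^2$ appears on both sides and cancels.

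The only delicate point is this equality case of Newton's inequalities. If the cited theorem does not state it, I would supply the standard argument myself: Newton's inequality for $k$ reduces, after differentiating and reversing the polynomial, which preserves real-rootedness, to the discriminant of a quadratic. That discriminant is strictly positive when the quadratic has distinct roots, and distinctness of the roots survives these operations by Rolle's theorem once the original roots are simple. So simplicity of the roots gives strictness.

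Finally, unimodality follows from log-concavity of a positive sequence, since ultra log-concavity implies log-concavity.
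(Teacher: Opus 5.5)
Your proof is correct and follows the paper's argument. Positivity comes from factoring over the negative roots with $h_{r,\lambda}(0)=1$. Strictness comes from the differentiate--reverse--differentiate reduction to a quadratic whose roots stay distinct by Rolle's theorem, which is exactly what the paper writes out; your primary appeal to the equality case of Newton's inequalities is a packaged form of that same argument.
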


In particular, the $h^*$-polynomials of wheel and whirl base polytopes
have simple negative roots and strictly ultra log-concave and unimodal
coefficient sequences; see \autoref{cor:wheel-whirl}.

The proof treats all $h_{r,\lambda}$, $0\le\lambda\le1$, simultaneously.
The crown--whirl identity and Ferroni's formula express each polynomial
as the crown $h^*$-polynomial minus $\lambda$ times an explicit correction.
We find points in $[-1,0)$ at which the crown polynomial alternates in
sign and exceeds the correction in modulus. To establish these bounds,
we use a transfer-matrix formula for the crown lattice-point count.
Near $t=0$, the largest eigenvalue approximates the count closely enough
to isolate a dominant term in the alternating Ehrhart series.
For the remaining points, including $t=-1$, we use a partial-fraction
expansion of a hyperbolic secant function: the phase of the nearest
conjugate poles fixes the sign, and their contribution dominates the
remaining terms and the wheel correction. Both arguments establish the same
comparison, so the chosen points force alternating signs for every
$h_{r,\lambda}$ and its reversal, giving $r-1$ roots in each of
$(-1,0)$ and $(-\infty,-1)$. These $2r-2$ distinct roots exhaust the
degree and are therefore simple.

\autoref{sec:matroids} establishes the crown--whirl correspondence.
\autoref{sec:crowns} develops the crown counting formulas and reciprocity.
\autoref{sec:wheel-roots} proves
\autoref{thm:crown} and \autoref{thm:wheel-whirl}.
\autoref{sec:closure} treats even cycles, products, and dilations,
including base polytopes of matroid direct sums.

\section{The crown--whirl correspondence}\label{sec:matroids}
We prove \eqref{eq:crown-whirl} using an affine map from the crown
order polytope $\OO(\CC_{2r})$ onto the whirl base polytope
$P(N_r)$. Two points have the same image exactly when they
differ by adding the same constant to every coordinate.
Requiring the minimum coordinate to be zero gives the
lattice-point bijection below.

Fix $r\ge3$. Write $v_1,\ldots,v_r$ for the rim vertices of $W_r$,
and $o$ for its hub. Let $f_i=ov_i$ and $e_i=v_iv_{i+1}$, with wheel
subscripts read modulo $r$. The rim edge set is $R:=\{e_1,\ldots,e_r\}$.
For each integer $q\ge0$, define an affine map
$\Phi_q:\R^{2r}\to\R^{E(W_r)}$, $x\mapsto y$, by
\begin{equation}\label{eq:affine-map}
 y(f_i)=x_{2i}-x_{2i-1},\qquad y(e_i)=q+x_{2i+1}-x_{2i},
\end{equation}
where $x_{2r+1}=x_1$.

\begin{theorem}\label{thm:correspondence}
For every integer $q\ge0$, the map $\Phi_q$ sends
$q\OO(\CC_{2r})$ onto $qP(N_r)$ and restricts to a bijection
\begin{equation}\label{eq:normalized-bijection}
 \Phi_q:\{x\in q\OO(\CC_{2r})\cap\Z^{2r}:
             \min_{1\le j\le2r}x_j=0\}
 \xrightarrow{\ \sim\ }qP(N_r)\cap\Z^{E(W_r)}.
\end{equation}
Consequently,
\begin{equation}\label{eq:whirl-difference}
 \ehr(P(N_r),q)=\ehr(\OO(\CC_{2r}),q)-\ehr(\OO(\CC_{2r}),q-1),
\end{equation}
as an identity of polynomials in $q$. Its evaluation at $q=0$ gives
$\ehr(\OO(\CC_{2r}),-1)=0$,  with the value
at $-1$ understood as evaluation of the Ehrhart polynomial. The identity
\eqref{eq:crown-whirl} follows.
\end{theorem}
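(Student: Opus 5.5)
The plan is to compare the two polytopes through vertices, then get the lattice-point statement by lifting, and finally derive the Ehrhart identity by a shifting argument.

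\emph{Polytope image.} Since $\Phi_q(qx)=q\,\Phi_1(x)$, it suffices to treat $q=1$. The linear part of $\Phi_q$ has kernel spanned by $\one$. Each $y=\Phi_q(x)$ satisfies
\[
\sum_i \bigl(y(f_i)+y(e_i)\bigr)=rq,
\]
because the $x$-differences telescope around the cycle. The vertices of $\OO(\CC_{2r})$ are the indicator vectors $\one_U$ of filters (up-sets) $U$ of $\CC_{2r}$. Hence $\Phi_1(\OO(\CC_{2r}))$ is the convex hull of the points $\Phi_1(\one_U)$, and this reduces the first claim to two checks.

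First, each $\Phi_1(\one_U)$ must be the indicator of a basis of $N_r$. Its coordinates are
\[
y(f_i)=[2i\in U]-[2i-1\in U],\qquad y(e_i)=1+[2i+1\in U]-[2i\in U].
\]
Both lie in $\{0,1\}$ because $U$ is an up-set, and there are exactly $r$ ones. For $U=\emptyset$ or $U=[2r]$ the image is $\one_R$, which is a whirl basis. Otherwise I will show that the support contains neither the rim nor any fan cycle $f_i,e_i,\dots,e_{k-1},f_k$. An $r$-set in $W_r$ with no cycle is a spanning tree.

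Second, I must show surjectivity onto bases. Given a spanning tree $T$, I reconstruct $U$ going around the cycle. Set $x_{2i}-x_{2i-1}=[f_i\in T]$ and $x_{2i+1}-x_{2i}=[e_i\in T]-1$. This is consistent around the cycle because $|T|=r$. I then normalize so that $\min x=0$, and check that $\max x\le1$ using the acyclicity of $T$. With both checks, convexity gives $\Phi_q(q\OO(\CC_{2r}))=qP(N_r)$.

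\emph{Lattice bijection.} Take an integer point $y\in qP(N_r)$. By the onto statement it has some preimage $x\in q\OO(\CC_{2r})$. All preimages differ by multiples of $\one$, and since consecutive differences of $x$ are integers, $x-(\min_j x_j)\one$ is an integer vector. This shifted vector still lies in $q\OO(\CC_{2r})$, because its coordinates lie in $[0,\max x-\min x]\subseteq[0,q]$. The normalized preimage is therefore unique and integral, which gives \eqref{eq:normalized-bijection}.

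\emph{Ehrhart identity.} The integer points of $q\OO(\CC_{2r})$ with $\min x\ge1$ correspond to those of $(q-1)\OO(\CC_{2r})$ via $x\mapsto x-\one$. This yields \eqref{eq:whirl-difference} for $q\ge1$, and hence as an identity of polynomials in $q$. Evaluating at $q=0$ gives $1=1-\ehr(\OO(\CC_{2r}),-1)$, so $\ehr(\OO(\CC_{2r}),-1)=0$.

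For the $h^*$-polynomials, multiply the crown Ehrhart series by $1-t$. The $q=0$ term is consistent because $\ehr(\OO(\CC_{2r}),-1)=0$. The crown polytope has dimension $2r$ and the whirl polytope has dimension $2r-1$, so the resulting denominators $(1-t)^{2r}$ agree and the numerators coincide, giving \eqref{eq:crown-whirl}.

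The main obstacle is the combinatorial verification that the filter images are exactly the bases of $N_r$. Both directions need care: the cyclic case analysis excluding fan cycles, and the bound $\max x-\min x\le1$ in the reconstruction. My first attempt will translate a fan cycle in the support into a violation of the up-set condition along the corresponding interval of the crown.
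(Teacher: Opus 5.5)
Your proposal is correct and follows essentially the same route as the paper. The $0/1$ points (filter indicators) of $\OO(\CC_{2r})$ map onto the whirl bases, integral points of $qP(N_r)$ lift uniquely once the minimum coordinate is normalized to zero, points with $\min x\ge1$ are shifted by $\one$ to give \eqref{eq:whirl-difference}, and the dimensions $2r$ versus $2r-1$ match the Ehrhart series. The differences are only in the vertex check: you count $r$ ones and exclude the rim and fan cycles, and you rebuild a tree's preimage from prescribed differences, where $\max x-\min x\le1$ needs acyclicity together with equal numbers of up- and down-steps. The paper instead telescopes along rim path components and writes down an explicit $(0,0),(0,1),(1,1)$ assignment.
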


\begin{proof}
We first identify the image for $q=1$.
The crown order polytope is the convex hull of its $0/1$ points
\cite[Corollary~1.3]{StanleyPosets}. Since affine maps commute with
taking convex hulls, it suffices to show that the images of these points
are precisely $\one_R$ and the vectors $\one_{E(T)}$, where $T$ ranges
over the spanning trees of $W_r$. For a $0/1$ point $x$, the crown
inequalities and \eqref{eq:affine-map} imply that $y=\Phi_1(x)$ is a
$0/1$ vector, with
\[
 \begin{aligned}
 y(f_i)=1&\iff (x_{2i-1},x_{2i})=(0,1),\\
 y(e_i)=1&\iff x_{2i}=x_{2i+1},\\
 y(e_i)=0&\iff (x_{2i},x_{2i+1})=(1,0).
 \end{aligned}
\]
Select the edges whose $y$-coordinates equal one.
If every rim edge is selected, then
\[
 x_1\le x_2=x_3\le x_4=\cdots\le x_{2r}=x_1.
\]
Hence all coordinates are equal and $y=\one_R$.

Otherwise, the subgraph of the rim formed by the selected rim edges is
a disjoint union of path components, with isolated vertices allowed.
Fix one component and, after a cyclic relabeling, write its vertices as
$v_1,\ldots,v_k$, where $1\le k\le r$. The unselected rim edge
immediately before $v_1$ and the one immediately after $v_k$, which
coincide when $k=r$, give $x_1=0$ and $x_{2k}=1$, respectively.
The internal edges give $x_{2i}=x_{2i+1}$ for $1\le i<k$.
Consequently, the intermediate coordinates cancel:
\[
 \sum_{i=1}^k y(f_i)
 =\sum_{i=1}^k(x_{2i}-x_{2i-1})=x_{2k}-x_1=1.
\]
Thus each path has exactly one selected spoke. These spokes attach
the disjoint paths to the hub, producing a spanning tree.

Conversely, let $T$ be a spanning tree of $W_r$. The spanning subgraph
on the rim vertices with edge set $E(T)\cap R$ consists of paths,
each containing exactly one vertex incident to a spoke of $T$:
connectivity requires at least one such vertex, while two would form a
cycle consisting of the two spokes, the hub, and the rim path between
their endpoints. Order the vertices of each path in the cyclic direction
$v_i\to v_{i+1}$, and assign
\[
 (x_{2i-1},x_{2i})=
 \begin{cases}
 (0,0),&\text{if $v_i$ precedes the spoke vertex in this order},\\
 (0,1),&\text{if $v_i$ is the spoke vertex},\\
 (1,1),&\text{if $v_i$ follows the spoke vertex in this order}.
 \end{cases}
\]
Within a path, $x_{2i}=x_{2i+1}$; across a missing rim edge,
$(x_{2i},x_{2i+1})=(1,0)$. These relations, together with
$x_{2i-1}\le x_{2i}$, verify all crown inequalities and give
$y=\one_{E(T)}$. The all-zero point maps to $\one_R$.
Thus the images of the $0/1$ points are exactly the indicator vectors
of the bases of $N_r$. Taking convex hulls therefore gives
$\Phi_1(\OO(\CC_{2r}))=P(N_r)$.

Next, fix an integer $q\ge1$. For every $x\in\R^{2r}$,
$\Phi_q(qx)=q\Phi_1(x)$,
so $\Phi_q$ maps $q\OO(\CC_{2r})$ onto $qP(N_r)$. Let
$y\in qP(N_r)\cap\Z^{E(W_r)}$, and choose
$x\in q\OO(\CC_{2r})$ with $\Phi_q(x)=y$. The equations in \eqref{eq:affine-map} fix all consecutive coordinate differences, so two
lifts of the same point differ by $c(1,\ldots,1)$ for some constant $c$. Since $y$ and $q$
are integral, these coordinate differences are integral.
Hence, the normalized lift
$\widetilde x_i=x_i-\min_j x_j$ is integral and has minimum zero.
Subtracting the minimum leaves the image under $\Phi_q$
unchanged, preserves the order inequalities, and keeps every
coordinate in $[0,q]$. The minimum-zero condition determines
the lift uniquely. Since \eqref{eq:affine-map} maps integral points
to integral points, this proves \eqref{eq:normalized-bijection} for $q\ge1$.
For $q=0$, the image and bijection statements hold because all the sets
involved consist only of the zero vector.

For $q\ge1$, the lattice points outside the normalized slice
$\min_jx_j=0$ are exactly those with $\min_jx_j\ge1$.
Subtracting one from every coordinate maps them bijectively onto
$(q-1)\OO(\CC_{2r})\cap\Z^{2r}$, proving
\eqref{eq:whirl-difference} for $q\ge1$.
Since the equality holds for every integer $q\ge1$, it is a
polynomial identity. 
Evaluating it at $q=0$ and using
$\ehr(P(N_r),0)=\ehr(\OO(\CC_{2r}),0)=1$ gives
$\ehr(\OO(\CC_{2r}),-1)=0$.

Finally, let $x^\circ\in\R^{2r}$ be defined by taking every
odd coordinate equal to $1/3$ and every even coordinate equal to $2/3$.
This point satisfies every defining inequality of
$\OO(\CC_{2r})$ strictly. Hence it is an interior point and
$\dim\OO(\CC_{2r})=2r$. 
The consecutive differences in \eqref{eq:affine-map} show that the
linear part of $\Phi_q$ has kernel $\R(1,\ldots,1)$.
It follows that $\dim P(N_r)=2r-1$.
Taking generating functions in \eqref{eq:whirl-difference} and using
$\ehr(\OO(\CC_{2r}),-1)=0$ yields
\[
 \sum_{q\ge0}\ehr(P(N_r),q)t^q
   =(1-t)\sum_{q\ge0}\ehr(\OO(\CC_{2r}),q)t^q
   =\frac{h^*_{\OO(\CC_{2r})}(t)}{(1-t)^{2r}}.
\]
Since $\dim P(N_r)=2r-1$, comparison with
\eqref{eq:ehrhart-series} proves \eqref{eq:crown-whirl}.
\end{proof}

\section{Crown lattice-point counts and reciprocity}\label{sec:crowns}

The crown inequalities give a transfer matrix whose eigenvalues yield
a cosecant sum for the lattice-point count. We extend this sum to an
even polynomial and use its reflection symmetry to deduce reciprocity
of $h^*_{\OO(\CC_{2r})}$. For integers $r\ge1$ and $n\ge1$, define
\begin{equation}\label{eq:cosecant-sum}
 S_r(n):=\sum_{i=0}^{n-1}
          \csc^{2r}\!\Bigl(\frac{(i+1/2)\pi}{n}\Bigr).
\end{equation}

\begin{lemma}\label{lem:spectral-count}
For integers $r\ge2$ and $q\ge0$,
\begin{equation}\label{eq:crown-count}
 \ehr(\OO(\CC_{2r}),q)=\frac{S_r(2q+3)-1}{2\cdot4^r}.
\end{equation}
\end{lemma}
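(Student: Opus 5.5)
The plan is to write the lattice-point count as the trace of the $r$-th power of a symmetric transfer matrix, diagonalize it explicitly, and then match the eigenvalue sum with $S_r(2q+3)$.

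\emph{Transfer-matrix trace.} A lattice point of $q\OO(\CC_{2r})$ is determined by the odd coordinates $a_i:=x_{2i-1}\in\{0,\ldots,q\}$ together with the even coordinates. By \eqref{eq:crown-polytope}, each even coordinate $x_{2i}$ is constrained only by $\max(a_i,a_{i+1})\le x_{2i}\le q$, where $a_{r+1}=a_1$. Summing over the even coordinates first gives
\[
 \ehr(\OO(\CC_{2r}),q)=\sum_{a_1,\ldots,a_r}\prod_{i=1}^r\bigl(q+1-\max(a_i,a_{i+1})\bigr)=\tr(A^r),
\]
where $A$ is the $(q+1)\times(q+1)$ matrix with entries $A_{ab}=q+1-\max(a,b)$ for $0\le a,b\le q$.

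\emph{Factorization and inverse.} Observe that $q+1-\max(a,b)=\#\{c\le q: c\ge a,\ c\ge b\}$. Hence $A=UU^{T}$, where $U$ is the $0/1$ upper-triangular matrix with $U_{ac}=[a\le c]$. Since $U^{-1}$ is the bidiagonal difference matrix $D$ (ones on the diagonal, $-1$ on the superdiagonal), we get $A^{-1}=D^{T}D$. This is a tridiagonal second-difference matrix with diagonal entries $1,2,\ldots,2$ (a Neumann-type end) and off-diagonal entries $-1$. Depending on orientation, the two ends carry different boundary conditions: one end is Neumann, and the other is Dirichlet, the latter coming from the row that has no further neighbor.

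\emph{Spectrum (main obstacle).} The key step, and the one I expect to need the most care, is computing the eigenvalues of $D^TD$ exactly. I would verify directly that the vectors $v^{(k)}_j=\cos\bigl((j+\tfrac12)\theta_k\bigr)$, or the corresponding sine vectors in the other orientation, are eigenvectors. Here $\theta_k=(2k-1)\pi/(2q+3)$ for $1\le k\le q+1$, and the eigenvalue is $4\sin^2(\theta_k/2)$. The interior rows follow from the identity $2\cos x-\cos(x-\theta)-\cos(x+\theta)=4\sin^2(\theta/2)\cos x$. The boundary rows impose a reflection condition at one end and a vanishing condition at the other, and together these force $(2q+3)\theta_k/2\equiv\pi/2\pmod\pi$. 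The $q+1$ values $\theta_k/2=(k-\tfrac12)\pi/(2q+3)$ lie in $(0,\pi/2)$ and are distinct, so they exhaust the spectrum. Consequently $A$ has eigenvalues $\tfrac14\csc^2\bigl((k-\tfrac12)\pi/(2q+3)\bigr)$, and
\[
 \tr(A^r)=4^{-r}\sum_{i=0}^{q}\csc^{2r}\Bigl(\frac{(i+1/2)\pi}{2q+3}\Bigr).
\]

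\emph{Matching with $S_r$.} In $S_r(2q+3)$, with $N=2q+3$, the terms for $i$ and $N-1-i$ are equal, because the two angles sum to $\pi$. The middle term $i=q+1$ has angle $\pi/2$ and contributes $1$. Therefore $S_r(2q+3)=2\sum_{i=0}^{q}\csc^{2r}\bigl((i+\tfrac12)\pi/N\bigr)+1$, which gives \eqref{eq:crown-count}. The case $q=0$ is consistent: $A=(1)$ and $S_r(3)=2\cdot 4^r+1$.
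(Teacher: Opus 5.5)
Your proof is correct and follows the paper's route: the transfer-matrix trace $\tr(A^r)$, the exact spectrum $\tfrac14\csc^2\bigl((k-\tfrac12)\pi/(2q+3)\bigr)$, and the pairing of terms in $S_r(2q+3)$ around the middle term $1$. The only difference is how you verify the spectrum. You invert $A=UU^T$ to the tridiagonal matrix $D^TD$ with Neumann and Dirichlet ends and use cosine eigenvectors, whereas the paper reverses $A$ into the min-matrix $\mathbf{K}_{q+1}$ and checks the sine eigenvectors $\sin(i\theta_j)$ directly by telescoping sums; under this reversal the two eigenvector families agree up to sign.
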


\begin{proof}
Fix the odd coordinates in $\{0,\ldots,q\}$. By
\eqref{eq:crown-polytope}, each even coordinate $x_{2i}$ has
$q+1-\max\{x_{2i-1},x_{2i+1}\}$ possible values, independently of the
other even coordinates. Define $\mathbf{K}_{q+1}:=(\min\{i,j\})_{1\le i,j\le q+1}$.
Reversing the row and column order of the matrix
$(q+1-\max\{u,v\})_{0\le u,v\le q}$ gives $\mathbf{K}_{q+1}$. Hence
\[
 \ehr(\OO(\CC_{2r}),q)
 =\sum_{x_1,x_3,\ldots,x_{2r-1}=0}^q
       \prod_{i=1}^r\bigl(q+1-\max\{x_{2i-1},x_{2i+1}\}\bigr)
 =\tr(\mathbf{K}_{q+1}^r),
\]
where $x_{2r+1}=x_1$ and $\tr$ denotes the trace.

Next, we evaluate this trace by two finite trigonometric sums.
For $0<\theta<\pi$ and $1\le i\le q+1$, the addition formulas telescope to
\[
2\sin(\theta/2)\sum_{k=i}^{q+1}\sin(k\theta)=\cos((i-\tfrac12)\theta)-\cos((q+\tfrac32)\theta),\qquad
 2\sin(\theta/2)\sum_{k=1}^i\cos((k-\tfrac12)\theta)=\sin(i\theta).
\]
To make the last term in the first identity vanish, we choose
\[
 \theta_j=\frac{(2j-1)\pi}{2q+3},\qquad 1\le j\le q+1.
\]
The two identities then give, for $1\le i\le q+1$,
\[
 \sum_{k=1}^{q+1}\min\{i,k\}\sin(k\theta_j)
   =\sum_{\ell=1}^i\sum_{k=\ell}^{q+1}\sin(k\theta_j)
   =\frac{\sin(i\theta_j)}{4\sin^2(\theta_j/2)}.
\]
Here the first equality counts each term $\sin(k\theta_j)$ exactly
$\min\{i,k\}$ times. Hence, the nonzero vector with $i$th entry $\sin(i\theta_j)$ is an
eigenvector of $\mathbf{K}_{q+1}$ with eigenvalue
$[4\sin^2(\theta_j/2)]^{-1}$.
Moreover, these $q+1$ eigenvalues are distinct, since
$0<\theta_1<\cdots<\theta_{q+1}<\pi$; together they form the full spectrum.
Taking their $r$th powers and summing yields
\begin{equation}\label{eq:trace-spectrum}
 \ehr(\OO(\CC_{2r}),q)
   =
 \tr(\mathbf{K}_{q+1}^r)
   =\frac1{4^r}\sum_{j=1}^{q+1}
       \csc^{2r}\!\Bigl(\frac{(2j-1)\pi}{4q+6}\Bigr).
\end{equation}
Finally, in \eqref{eq:cosecant-sum} for $S_r(2q+3)$, the middle term
equals $1$, while reflection about $\pi/2$ pairs the remaining terms.
By \eqref{eq:trace-spectrum}, we have
\[
 S_r(2q+3)-1
   =2\sum_{j=1}^{q+1}\csc^{2r}\!\Bigl(\frac{(2j-1)\pi}{4q+6}\Bigr)
   =2\cdot4^r\,\ehr(\OO(\CC_{2r}),q),
\]
which proves \eqref{eq:crown-count}.
\end{proof}

To prove the reciprocity identity \eqref{eq:crown-reciprocity} for
$h^*_{\OO(\CC_{2r})}$, we first extend $S_r$ to an even polynomial.
Such polynomial formulas for finite cosecant power sums are classical;
see Dowker \cite{Dowker}. 
For each integer $r\ge1$, define $a_{r,1},\ldots,a_{r,r}$ by the
principal part of the Laurent expansion at zero:
\[
 \csc^{2r}x=\sum_{j=1}^r a_{r,j}x^{-2j}+O(1),
 \qquad x\to0.
\]

\begin{lemma}\label{lem:laurent}
For every integer $r\ge1$, $a_{r,r}=1$ and $a_{r,j}>0$ for
$1\le j\le r$. Moreover, $S_r(n)$ extends to an even polynomial
of degree $2r$, given by
\begin{equation}\label{eq:odd-sum}
 S_r(x)=2\sum_{j=1}^r a_{r,j}\left(\frac{2x}{\pi}\right)^{2j}
          \sum_{i=0}^{\infty}(2i+1)^{-2j}.
\end{equation}
\end{lemma}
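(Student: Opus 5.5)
The plan is to handle the Laurent coefficients first, then obtain the polynomial formula for $S_r(n)$ by summing partial fractions of $\csc^{2r}$.

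\textbf{Laurent coefficients.} Write $\csc^{2r}x=x^{-2r}(x/\sin x)^{2r}$. Since $x/\sin x=\sum_{k\ge0}c_kx^{2k}$ has $c_0=1$ and $c_k>0$ for all $k$, its $2r$th power also has constant term $1$ and positive coefficients. Reading off the coefficient of $x^{2r-2j}$ gives $a_{r,r}=1$ and $a_{r,j}>0$ for $1\le j\le r$. Positivity of the $c_k$ follows from the partial-fraction (Mittag-Leffler) expansion
\[
 \csc x=\frac1x+\sum_{m\ge1}(-1)^m\frac{2x}{x^2-m^2\pi^2}.
\]
Expanding in powers of $x^2$ gives $\csc x-1/x=\sum_{k\ge1}2(1-2^{1-2k})\zeta(2k)\pi^{-2k}x^{2k-1}$, whose coefficients are positive.

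\textbf{Partial fractions for $\csc^{2r}$.} The function $\csc^{2r}x$ is $\pi$-periodic and even. Its only poles are at $m\pi$, and by periodicity each has principal part $\sum_j a_{r,j}(x-m\pi)^{-2j}$. Define
\[
 F(x):=\csc^{2r}x-\sum_{m\in\Z}\sum_{j=1}^r a_{r,j}(x-m\pi)^{-2j}.
\]
The double sum converges absolutely because $j\ge1$ gives exponent at least $2$. So $F$ is entire and $\pi$-periodic. As $|\operatorname{Im}x|\to\infty$, both $\csc^{2r}x$ and the lattice sums tend to $0$ uniformly in $\operatorname{Re}x$. For the lattice sums this follows from the classical identity $\sum_m(x-m\pi)^{-2}=\csc^2x$ and its derivatives. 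Hence $F$ is bounded, so by Liouville $F$ is constant, and the constant is $0$. This is the step needing the most care, though it is standard.

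\textbf{Summing over the sample points.} Set $x_i=(i+1/2)\pi/n$ for $0\le i\le n-1$. The points $x_i-m\pi$ for $i\in\{0,\dots,n-1\}$ and $m\in\Z$ are exactly the numbers $(2k+1)\pi/(2n)$ with $k\in\Z$, each occurring once. Since every term of the double series is a positive even power, we may rearrange freely. Therefore
\[
 S_r(n)=\sum_{j=1}^r a_{r,j}\sum_{k\in\Z}\Bigl(\frac{2n}{(2k+1)\pi}\Bigr)^{2j}
 =2\sum_{j=1}^r a_{r,j}\Bigl(\frac{2n}{\pi}\Bigr)^{2j}\sum_{i\ge0}(2i+1)^{-2j},
\]
using symmetry $k\leftrightarrow -1-k$.

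\textbf{Conclusion.} The right side is an even polynomial in $n$. Its degree is exactly $2r$, because the top coefficient $2a_{r,r}(2/\pi)^{2r}\sum_i(2i+1)^{-2r}$ is positive. This gives \eqref{eq:odd-sum} and the stated extension of $S_r$.
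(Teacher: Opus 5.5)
Your proof is correct, but it takes a different route from the paper's proof for both halves of the lemma. The final summation over the sample points $(i+1/2)\pi/n$ is the same in both.

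\textbf{The paper's route.} It proves positivity of the $a_{r,j}$ and the partial-fraction identity $\csc^{2r}x=\sum_{\ell\in\Z}\sum_{j}a_{r,j}(x-\ell\pi)^{-2j}$ together, by induction on $r$. It starts from the classical expansion of $\csc^2$ and applies $d^2/dx^2+4r^2$ termwise, using $(d^2/dx^2+4r^2)\csc^{2r}x=2r(2r+1)\csc^{2r+2}x$. Reading off Laurent coefficients gives the recurrence $2r(2r+1)a_{r+1,j}=(2j-2)(2j-1)a_{r,j-1}+4r^2a_{r,j}$. This yields positivity and $a_{r+1,r+1}=a_{r,r}=1$, and transports the partial fractions from $r$ to $r+1$ with no constant term to identify.

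\textbf{Your route.} You get positivity without induction, from $a_{r,j}=[x^{2r-2j}](x/\sin x)^{2r}$ and the positive Taylor coefficients $2(1-2^{1-2k})\zeta(2k)\pi^{-2k}$ of $x/\sin x$. You get the partial fractions for every $r$ at once by a Liouville argument: the difference $F$ is entire, $\pi$-periodic, and decays as $|\operatorname{Im}x|\to\infty$. This is more conceptual and uniform in $r$. It costs some complex analysis: you need the vertical decay of the derivatives of $\csc^2$, and local uniform convergence of the lattice sum so that $F$ is holomorphic off $\pi\Z$. That second point is worth stating explicitly.

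\textbf{What each buys.} The paper's route stays within real termwise differentiation. More importantly for this paper, it uses the differential identity that is iterated later in Lemma~\ref{wh:coefficients}. There it produces the closed form $a_{r,r-i}=4^i(2r-2i-1)!\,e_{r,i}/(2r-1)!$, with $e_{r,i}$ the elementary symmetric functions of $1^2,\dots,(r-1)^2$, which drives the coefficient estimates. Your generating-function description of $a_{r,j}$ does not supply that formula directly.
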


\begin{proof}
We prove the coefficient assertions together with the partial-fraction identity
\begin{equation}\label{eq:partial-fractions}
 \csc^{2r}x=\sum_{\ell\in\Z}\sum_{j=1}^r a_{r,j}(x-\ell\pi)^{-2j},
 \qquad x\in\R\setminus\pi\Z,
\end{equation}
by induction on $r$. For $r=1$, we have $a_{1,1}=1$, and
\eqref{eq:partial-fractions} is the classical expansion
\cite[equation~4.22.4]{DLMF}.
On each compact subset of $\R\setminus\pi\Z$, the $m$th derivative
of $(x-\ell\pi)^{-2j}$ is $O(|\ell|^{-2j-m})$ uniformly as
$|\ell|\to\infty$, for every integer $m\ge0$.
Hence, the series in \eqref{eq:partial-fractions} and all its derivatives
converge absolutely and locally uniformly, allowing termwise differentiation.

To pass from $r$ to $r+1$, use the differential identity
\begin{equation}\label{eq:cosecant-differential}
 \Bigl(\frac{d^2}{dx^2}+4r^2\Bigr)\csc^{2r}x
       =2r(2r+1)\csc^{2r+2}x.
\end{equation}
Comparing coefficients of $x^{-2j}$ in the Laurent expansions at zero yields
\[
 2r(2r+1)a_{r+1,j}
  =(2j-2)(2j-1)a_{r,j-1}+4r^2a_{r,j},\qquad 1\le j\le r+1,
\]
where $a_{r,0}=a_{r,r+1}=0$ for this recurrence.
By the induction hypothesis, the right-hand side is positive for
$1\le j\le r+1$; at $j=r+1$, the recurrence gives
$a_{r+1,r+1}=a_{r,r}=1$.
Apply $d^2/dx^2+4r^2$ termwise to \eqref{eq:partial-fractions} and divide
by $2r(2r+1)$. The recurrence shows that the resulting series has
coefficients $a_{r+1,j}$, proving the expansion for $r+1$ and completing
the induction.

Finally, for an integer $n\ge1$, substitute $x=(k+1/2)\pi/n$ into
\eqref{eq:partial-fractions} and sum over $0\le k<n$:
\[
 \begin{aligned}
 S_r(n)
 &=\sum_{j=1}^r a_{r,j}(n/\pi)^{2j}
       \sum_{k=0}^{n-1}\sum_{\ell\in\Z}(k+1/2-\ell n)^{-2j}\\
 &=2\sum_{j=1}^r a_{r,j}(2n/\pi)^{2j}
       \sum_{i=0}^{\infty}(2i+1)^{-2j}.
 \end{aligned}
\]
Indeed, division with remainder shows that $k-\ell n$ runs through every
integer exactly once. Pairing the positive and negative half-integers
gives the second equality; absolute convergence justifies the rearrangement.
The last expression is a polynomial in $n$ with
positive coefficients of $n^2,n^4,\ldots,n^{2r}$ and no other terms.
Hence, the polynomial in \eqref{eq:odd-sum} uniquely extends $S_r$.
\end{proof}

Since both sides of \eqref{eq:crown-count} are polynomials in $q$
and agree for every integer $q\ge0$, they agree identically.
Evenness of $S_r$ and $S_r(1)=1$ imply
\begin{equation}\label{eq:count-reflection}
 \ehr(\OO(\CC_{2r}),-1)=\ehr(\OO(\CC_{2r}),-2)=0,\qquad \ehr(\OO(\CC_{2r}),-3-q)=\ehr(\OO(\CC_{2r}),q).
\end{equation}
We now recover the known degree and coefficient symmetry of
$h^*_{\OO(\CC_{2r})}$
\cite[Corollaries~5.13 and~5.16]{Crown} from
\eqref{eq:count-reflection} through the Ehrhart series.

\begin{lemma}\label{lem:reciprocity}
For every integer $r\ge2$, $h^*_{\OO(\CC_{2r})}$ is monic of degree
$2r-2$ and satisfies
\begin{equation}\label{eq:crown-reciprocity}
 t^{2r-2}h^*_{\OO(\CC_{2r})}(1/t)=h^*_{\OO(\CC_{2r})}(t).
\end{equation}
\end{lemma}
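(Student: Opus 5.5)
We derive the lemma from the Ehrhart series using the reflection identities \eqref{eq:count-reflection}, which hold as polynomial identities in $q$ by \autoref{lem:spectral-count} and \autoref{lem:laurent}. Write $E(q):=\ehr(\OO(\CC_{2r}),q)$. This is a polynomial of degree $d=2r$: by \eqref{eq:crown-count} and \autoref{lem:laurent}, its degree equals that of $S_r$, and $\dim\OO(\CC_{2r})=2r$ was shown in the proof of \autoref{thm:correspondence}. Its leading coefficient is the normalized volume $e(\CC_{2r})/(2r)!$, where $e(\CC_{2r})$ denotes the number of linear extensions. We have
\[
 \sum_{q\ge0}E(q)t^q=\frac{h^*(t)}{(1-t)^{2r+1}},\qquad h^*:=h^*_{\OO(\CC_{2r})}.
\]
Since $E(-1)=E(-2)=0$, standard Ehrhart theory bounds the degree. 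Recall that $\deg h^*\le d-k$ if and only if $E(-1)=\cdots=E(-k)=0$, as follows from Popoviciu/Stanley reciprocity for rational generating functions $\sum_{q\ge1}E(-q)t^q=-\sum_{q\ge0}E(q)t^{-q}$. Hence $\deg h^*\le 2r-2$.

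The symmetry comes from the shift. The relation $E(-3-q)=E(q)$ says that $E(q-1)$ and $E(-q-2)$ agree, that is, $E$ is symmetric about $q=-3/2$. Apply Stanley's reciprocity theorem for rational functions, which states $\sum_{q\ge1}E(-q)t^{q}=-F(1/t)$ where $F(t)=\sum_{q\ge0}E(q)t^q$. Then
\[
 \sum_{q\ge1}E(-q)t^q=\sum_{q\ge3}E(q-3)t^q=t^3F(t),
\]
using $E(-1)=E(-2)=0$ and the reflection. This gives $F(1/t)=-t^3F(t)$. Substituting $F(t)=h^*(t)/(1-t)^{2r+1}$ and simplifying $(1-1/t)^{2r+1}=-t^{-2r-1}(1-t)^{2r+1}$ yields $t^{2r+1}h^*(1/t)=t^3h^*(t)$, i.e.\ $t^{2r-2}h^*(1/t)=h^*(t)$, which is \eqref{eq:crown-reciprocity}.

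For monicity and the exact degree, note that $h^*(0)=E(0)=1$. The symmetry then forces the coefficient of $t^{2r-2}$ to equal $1$, so $h^*$ is monic of degree exactly $2r-2$; this also confirms the degree bound from the first step.

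The only delicate point is the form of Stanley's reciprocity we invoke. To avoid depending on it, we can argue directly. Expand $E$ in the binomial basis $\binom{q+2r-i}{2r}$, so that $E(q)=\sum_i h_i^*\binom{q+2r-i}{2r}$. Then use $\binom{-q-3+2r-i}{2r}=\binom{q+2+i}{2r}$ to see that the reflection $q\mapsto -3-q$ sends the basis element with index $i$ to the one with index $2r-2-i$. Linear independence of these basis polynomials for $0\le i\le 2r$ gives $h^*_i=h^*_{2r-2-i}$, with $h^*_{2r-1}=h^*_{2r}=0$ matched against $h^*_{-1}=h^*_{-2}=0$. This bookkeeping is the main place to be careful, but it is routine.
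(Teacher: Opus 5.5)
Your proposal is correct and follows essentially the same route as the paper. The paper also uses $G_r(1/t)=-\sum_{q\ge1}E(-q)t^q$, though it verifies this on the basis $\binom qk$ rather than citing Stanley--Popoviciu reciprocity. It then rewrites the right side as $-t^3G_r(t)$ via $E(-1)=E(-2)=0$ and $E(-3-q)=E(q)$, and reads off symmetry, degree and monicity from $h^*(0)=1$. Your separate degree bound from $E(-1)=E(-2)=0$ is redundant for the main argument, since reciprocity of a polynomial already forces $\deg h^*\le 2r-2$. It is, however, exactly what your binomial-basis alternative needs to discard the indices $2r-1,2r$ before invoking linear independence.
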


\begin{proof}
Put $G_r(t)=\sum_{q\ge0}\ehr(\OO(\CC_{2r}),q)t^q$.
As an identity of rational functions,
\[
 G_r(1/t)=-\sum_{q\ge1}\ehr(\OO(\CC_{2r}),-q)t^q.
\]
By linearity, it suffices to verify this identity for
$\binom qk$, $0\le k\le2r$.
Indeed,
\[
 \sum_{q\ge0}\binom qk t^q=\frac{t^k}{(1-t)^{k+1}}, \qquad 
 -\sum_{q\ge1}\binom{-q}{k}t^q
    =\frac{(-1)^{k+1}t}{(1-t)^{k+1}}.
\]
Using the zeros at $-1,-2$ and the reflection in
\eqref{eq:count-reflection}, we obtain
\[
 G_r(1/t)
 =-\sum_{q\ge3}\ehr(\OO(\CC_{2r}),q-3)t^q
 =-t^3G_r(t).
\]
Since the crown order polytope has dimension $2r$, \eqref{eq:ehrhart-series} gives
$G_r(t)=h^*_{\OO(\CC_{2r})}(t)/(1-t)^{2r+1}$.
Substitution gives
\[
 G_r(1/t)
   =-\frac{t^{2r+1}h^*_{\OO(\CC_{2r})}(1/t)}{(1-t)^{2r+1}}
   =-t^3G_r(t)
    =-\frac{t^3h^*_{\OO(\CC_{2r})}(t)}{(1-t)^{2r+1}},
\]
and hence \eqref{eq:crown-reciprocity}.
Since $h^*_{\OO(\CC_{2r})}$ is a polynomial, reciprocity forces its
degree to be at most $2r-2$. Comparing constant terms in
\eqref{eq:crown-reciprocity} shows that its coefficient of $t^{2r-2}$
equals $h^*_{\OO(\CC_{2r})}(0)=1$. Hence its degree is exactly
$2r-2$, and it is monic.
\end{proof}
For $r=2$, fix an integer $q\ge0$. For each $0\le j\le q$, exactly $(j+1)^2-j^2=2j+1$ pairs $(x_1,x_3)\in\{0,\ldots,q\}^2$ have maximum $j$. Each pair permits $(q+1-j)^2$ choices of $(x_2,x_4)$. Hence, $\ehr(\OO(\CC_4),q)=\sum_{j=0}^q(2j+1)(q+1-j)^2$. The generating function factors as
\begin{equation}\label{eq:crown-rank-two}
 \sum_{q\ge0}\ehr(\OO(\CC_4),q)t^q=\Bigl(\sum_{j\ge0}(2j+1)t^j\Bigr) \Bigl(\sum_{k\ge0}(k+1)^2t^k\Bigr)=\frac{(1+t)^2}{(1-t)^5}.
\end{equation}
\section{Real-rootedness along the wheel--whirl segment}\label{sec:wheel-roots}
Throughout this section, let $r\ge3$ be an integer.
By \eqref{eq:crown-whirl}, $h_{r,0}(t)=h^*_{\OO(\CC_{2r})}(t)$.
For integers $k\ge0$, put $B_k(t):=\sum_{i=0}^k\binom ki^2t^i$.
Combining \eqref{eq:interpolation} with \eqref{eq:wheel-whirl-difference} gives
\begin{equation}\label{wh:interpolation-correction}
 h_{r,\lambda}(t)=h_{r,0}(t)-\lambda tB_{r-1}(t),\qquad 0\le\lambda\le1.
\end{equation}
We will locate $r-1$ roots in each of $(-1,0)$ and $(-\infty,-1)$ by
comparing $h_{r,0}$ with the correction $tB_{r-1}(t)$.
Passing to the reciprocal polynomial
$t^{2r-2}h_{r,\lambda}(1/t)$ allows us to use the same
comparison points in $[-1,0]$ for both intervals.
\subsection{A sign-comparison criterion}\label{wh:sign-criterion}

The correction $tB_{r-1}(t)$ has degree $r<2r-2$ and vanishes at zero.
Hence, \eqref{wh:interpolation-correction} and
\autoref{lem:reciprocity} show that every $h_{r,\lambda}$ is monic
of degree $2r-2$ with constant term $1$.
Since the coefficients of $B_{r-1}$ are symmetric, $t^{r-1}B_{r-1}(1/t)=B_{r-1}(t)$.
Combining this identity with \eqref{eq:crown-reciprocity} and
\eqref{wh:interpolation-correction} gives the reciprocal polynomial of
$h_{r,\lambda}$:
\begin{equation}\label{wh:reversal}
 t^{2r-2} h_{r,\lambda}(1/t)=h_{r,0}(t)-\lambda t^{r-2}B_{r-1}(t).
\end{equation}

\begin{lemma}\label{lem:sign-criterion}
Suppose there are points $0>t_1>\cdots>t_{r-1}=-1$ such that
\begin{equation}\label{wh:sign-comparison}
 (-1)^jh_{r,0}(t_j)>|t_jB_{r-1}(t_j)|,\qquad 1\le j\le r-1.
\end{equation}
Then $h_{r,\lambda}$ has $2r-2$ simple negative roots for every
$\lambda\in[0,1]$.
\end{lemma}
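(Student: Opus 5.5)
The plan is an intermediate value argument, applied both to $h_{r,\lambda}$ on $[-1,0]$ and to its reciprocal polynomial \eqref{wh:reversal}. Set $t_0:=0$. We already know $h_{r,\lambda}(0)=1>0$. For $1\le j\le r-1$ we have $\lambda\in[0,1]$, and \eqref{wh:interpolation-correction} gives
\[
 (-1)^jh_{r,\lambda}(t_j)\ge(-1)^jh_{r,0}(t_j)-\lambda|t_jB_{r-1}(t_j)|
 \ge(-1)^jh_{r,0}(t_j)-|t_jB_{r-1}(t_j)|>0 .
\]
Hence the values $h_{r,\lambda}(t_0),h_{r,\lambda}(t_1),\ldots,h_{r,\lambda}(t_{r-1})$ strictly alternate in sign. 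The intermediate value theorem then gives a root in each open interval $(t_j,t_{j-1})$, $1\le j\le r-1$. These $r-1$ roots are distinct and lie in $(-1,0)$.

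For the reciprocal, write $g_\lambda(t):=t^{2r-2}h_{r,\lambda}(1/t)=h_{r,0}(t)-\lambda t^{r-2}B_{r-1}(t)$. For $t_j\in[-1,0)$ we have $|t_j|^{r-2}\le|t_j|$ because $r\ge3$. Therefore
\[
 |t_j^{r-2}B_{r-1}(t_j)|\le|t_jB_{r-1}(t_j)|,
\]
and the same estimate shows $(-1)^jg_\lambda(t_j)>0$. We also have $g_\lambda(0)=h_{r,0}(0)=1$, since $r-2\ge1$ makes the correction vanish at $0$. So $g_\lambda$ has $r-1$ distinct roots in $(-1,0)$. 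Each nonzero root $s$ of $g_\lambda$ corresponds to the root $1/s$ of $h_{r,\lambda}$, so $h_{r,\lambda}$ has $r-1$ distinct roots in $(-\infty,-1)$.

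These two sets of roots are disjoint. Their union gives $2r-2$ distinct negative roots, because $-1$ is not a root ($h_{r,\lambda}(-1)\neq0$ by the $j=r-1$ inequality). As noted before the lemma, $h_{r,\lambda}$ is monic of degree exactly $2r-2$. So these are all of its roots, and each is simple.

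The only subtle points are bookkeeping. The endpoint $t_{r-1}=-1$ has to be handled so that no root is counted twice; this works because every root found lies in an open interval strictly inside $(-1,0)$ or its image under $t\mapsto1/t$. The step that needs care is the comparison $|t|^{r-2}\le|t|$, which uses $r\ge3$ and $|t|\le1$. I do not expect a real obstacle here; the hard work is in producing points satisfying \eqref{wh:sign-comparison}, which this lemma assumes.
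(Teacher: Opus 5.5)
Your proposal is correct and follows the paper's argument. Using $r\ge3$ and $|t|\le1$, you bound both corrections $\lambda tB_{r-1}(t)$ and $\lambda t^{r-2}B_{r-1}(t)$ by $|tB_{r-1}(t)|$, deduce alternating signs, and apply the intermediate value theorem to get $r-1$ roots in each of $(-1,0)$ and $(-\infty,-1)$, which exhaust the degree. The only cosmetic difference is that you run the intermediate value theorem for the reversed polynomial on $[-1,0]$ anchored at $g_\lambda(0)=1$. The paper instead transfers the signs to the points $1/t_j$ and anchors with $h_{r,\lambda}(t)\to+\infty$ as $t\to-\infty$; these are equivalent.
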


\begin{proof}
Fix $\lambda\in[0,1]$.
For $t\in[-1,0]$, the assumption $r\ge3$ gives $|t|^{r-2}\le|t|$.
Thus both corrections $\lambda tB_{r-1}(t)$ and
$\lambda t^{r-2}B_{r-1}(t)$ have modulus at most $|tB_{r-1}(t)|$.
By \eqref{wh:sign-comparison}, \eqref{wh:interpolation-correction}, and \eqref{wh:reversal},
\[
 (-1)^jh_{r,\lambda}(t_j)>0,\qquad
 (-1)^jt_j^{2r-2}h_{r,\lambda}(1/t_j)>0,
 \qquad 1\le j\le r-1.
\]
Since $t_j^{2r-2}>0$, the signs of $h_{r,\lambda}$ alternate at the
$2r-3$ comparison points
\[
 -\infty<\frac1{t_1}<\cdots<\frac1{t_{r-2}}<-1
 <t_{r-2}<\cdots<t_1<0,
\]
beginning and ending with a negative value.
Together with $h_{r,\lambda}(0)=1$ and
$\lim_{t\to-\infty}h_{r,\lambda}(t)=+\infty$, these signs give, by the
intermediate value theorem, a root in each of the $2r-2$ subintervals of
$(-\infty,0)$ determined by these points. These roots are distinct and
exhaust the degree, so all are simple.
\end{proof}

We first apply \autoref{lem:sign-criterion} when $r=3$.
By \eqref{eq:trace-spectrum}, the counts at $q=0,1,2$ are $1,18,129$.
Multiplying the crown Ehrhart series by $(1-t)^7$ gives the first three
numerator coefficients $1$, $18-7=11$, and $129-7\cdot18+21=24$.
The reciprocity identity \eqref{eq:crown-reciprocity} now yields
\[
 h_{3,0}(t)=t^4+11t^3+24t^2+11t+1.
\]
The values
\[
 -h_{3,0}(-1/3)=\frac{32}{81}>\frac2{27}=|(-1/3)B_2(-1/3)|,\qquad h_{3,0}(-1)=4>2=|B_2(-1)|
\]
verify \eqref{wh:sign-comparison} at $-1/3,-1$.
Thus $h_{3,\lambda}$ has four simple negative roots for every
$\lambda\in[0,1]$.

For $r\ge4$, we verify \eqref{wh:sign-comparison} by two estimates.
For indices $j$ satisfying $r\ge(j+1)(j+2)$, we isolate a dominant term
in the alternating Ehrhart series. In the complementary range, we choose comparison
points so that the highest-order contribution from the nearest pair of conjugate
poles of a hyperbolic secant function has the required alternating sign.
Both estimates require control of the wheel correction, so we first
record the following bound:
\begin{equation}\label{wh:binomial-bound}
 |B_k(-x)|\le(1+x)^k,\qquad x\ge0.
\end{equation}
The Laurent polynomial
$(1+\mathrm{i}\sqrt{x}z)^k(1+\mathrm{i}\sqrt{x}z^{-1})^k$
has constant term $B_k(-x)$.
Averaging over $z=e^{\mathrm{i}\theta}$, $0\le\theta\le2\pi$,
removes all nonconstant terms and gives
\[
 B_k(-x)=\frac1{2\pi}\int_0^{2\pi}
       (1-x+2\mathrm{i}\sqrt{x}\cos\theta)^k\,d\theta.
\]
Since
\[
 |1-x+2\mathrm{i}\sqrt{x}\cos\theta|^2
 =(1+x)^2-4x\sin^2\theta\le(1+x)^2,
\]
taking absolute values in the integral proves \eqref{wh:binomial-bound}.

\subsection{The dominant-eigenvalue estimate}\label{wh:endpoints}

We approximate each count in the alternating Ehrhart series by the
$r$th power of the largest transfer-matrix eigenvalue.
For real $q\ge0$, set
\[
 \Lambda_q:=\Bigl[4\sin^2\Bigl(\frac{\pi}{4q+6}\Bigr)\Bigr]^{-1}.
\]
At integral $q\ge0$, the number $\Lambda_q$ is the largest eigenvalue
in \eqref{eq:trace-spectrum}.
To control cancellation, we choose $0<x<1$ so that the $j$th term of
$(\Lambda_q^r x^q)_{q\ge0}$ exceeds the sum of its two neighbors.
Log-concavity then bounds each alternating tail by its first term.
This requires estimates for the spectral error and the adjacent-term ratios.

For integers $q\ge0$, applying the sine chord bound and the integral
test to \eqref{eq:trace-spectrum} gives
\begin{equation}\label{wh:spectral-error}
 0\le\frac{\ehr(\OO(\CC_{2r}),q)}{\Lambda_q^r}-1<2\cdot3^{-r},\qquad r\ge3.
\end{equation}
For $2\le i\le q+1$, the angles in the denominator lie in $(0,\pi/2)$.
Using $\sin v\le v$ and $\sin v\ge2v/\pi$ on this interval bounds the
ratio of the $i$th spectral term to the first by $(\pi/(2(2i-1)))^{2r}$.
Extending the sum and applying the integral test gives
\[
 \begin{aligned}
 \frac{\ehr(\OO(\CC_{2r}),q)}{\Lambda_q^r}-1
 &=\sum_{i=2}^{q+1}
 \left(\frac{\sin(\pi/(4q+6))}{\sin((2i-1)\pi/(4q+6))}\right)^{2r}\\
 &\le\Bigl(\frac\pi2\Bigr)^{2r}\sum_{i\ge2}(2i-1)^{-2r}\\
 &\le\Bigl(\frac\pi2\Bigr)^{2r}
       \left(3^{-2r}+\int_2^\infty(2v-1)^{-2r}\,dv\right)\\
 &=\Bigl(\frac{\pi^2}{36}\Bigr)^r
                \Bigl(1+\frac3{4r-2}\Bigr)<2\cdot3^{-r}.
 \end{aligned}
\]
To control the adjacent-term ratios, put $v=\pi/(4q+6)$ for real
$q\ge0$. Then $0<v\le\pi/6$, and differentiation gives
\begin{equation}\label{wh:lambda-derivatives}
 \frac{d}{dq}\log\Lambda_q=\frac{2v\cot v}{q+3/2},\qquad
 \frac{d^2}{dq^2}\log\Lambda_q
 =\frac{2v(v-\sin(2v))}{(q+3/2)^2\sin^2v}<0.
\end{equation}
The chord bound $\sin(2v)\ge4v/\pi>v$ proves strict concavity of $\log\Lambda_q$.
Thus $(\Lambda_q^r x^q)_{q\ge0}$ is log-concave.
Moreover,
\[
 0<1-v\cot v
 =\frac{\int_0^v u\sin u\,du}{\sin v}
 <\frac{v^2}{3(1-v^2/6)}<\frac7{20}v^2.
\]
Here $v^2\le\pi^2/36<2/7$ and $\sin v\ge v-v^3/6$.
Using $\pi^2<10$, we obtain
\begin{equation}\label{wh:psi-prime}
 \frac2{q+3/2}-\frac7{16(q+3/2)^3}
 <\frac{d}{dq}\log\Lambda_q<\frac2{q+3/2}.
\end{equation}

The logarithmic derivative of $\Lambda_q^r x^q$ is
$r\frac{d}{dq}\log\Lambda_q+\log x$.
We choose the weight by averaging the main term in \eqref{wh:psi-prime}
over the unit interval centered at an index $j\ge1$:
\[
 -\log x=r\int_{j-1/2}^{j+1/2}\frac{2\,dq}{q+3/2}
         =2r\log\frac{j+2}{j+1}.
\]

\begin{lemma}\label{wh:interior-endpoint}
Let $j\ge1$ and $r\ge(j+1)(j+2)$ be integers. Then $(-1)^jh_{r,0}(-x)>|xB_{r-1}(-x)|$ for $x=\Bigl(\frac{j+1}{j+2}\Bigr)^{2r}$.
\end{lemma}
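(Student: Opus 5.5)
We start from the Ehrhart series at $t=-x$. Since $\dim\OO(\CC_{2r})=2r$,
\[
 h_{r,0}(-x)=(1+x)^{2r+1}\sum_{q\ge0}\ehr(\OO(\CC_{2r}),q)(-x)^q .
\]
Write $c_q:=\ehr(\OO(\CC_{2r}),q)x^q$ and $d_q:=\Lambda_q^rx^q$. By \eqref{wh:spectral-error}, $c_q=d_q(1+\varepsilon_q)$ with $0\le\varepsilon_q<2\cdot3^{-r}$.

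By \eqref{wh:lambda-derivatives}, $(d_q)$ is log-concave. The choice $-\log x=2r\log\frac{j+2}{j+1}$ puts its peak at index $j$. Concretely, integrating \eqref{wh:psi-prime} over $[j-1,j]$ and $[j,j+1]$ gives
\[
 \log\frac{d_j}{d_{j-1}}>2r\log\frac{j+3/2}{j+1/2}-2r\log\frac{j+2}{j+1}-(\text{cubic error}),
\]
and a similar bound for $\log(d_j/d_{j+1})$. The logarithm differences are positive: $\log\frac{2j+3}{2j+1}-\log\frac{j+2}{j+1}$ is of order $1/(4(j+1)^3)$ after a Taylor comparison. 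The error from the $\frac{7}{16}(q+3/2)^{-3}$ term must be smaller than this. I would verify this comparison explicitly; it looks tight but plausible since $7/16\cdot$(integral) versus the second-difference term both scale like $j^{-3}$. Multiplying by $r\ge(j+1)(j+2)$ then gives
\[
 \frac{d_{j\pm1}}{d_j}\le e^{-c r/(j+1)^3}\le e^{-c'/(j+1)}.
\]
This alone is not below $1/2$, so more is needed.

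Instead, I would use the weaker target that $d_j>d_{j-1}+d_{j+1}$, which is the statement's stated aim. Here I expect to use the exact integrals. We have $d_{j+1}/d_j=\exp(r\int_j^{j+1}\frac{d}{dq}\log\Lambda_q\,dq+\log x)$. The main-term part is $2r[\log\frac{j+5/2}{j+3/2}-\log\frac{j+2}{j+1}]$, which is about $-r/(2(j+1)^3)$. Bounding each ratio by something like $e^{-1/(2(j+1))}$ is insufficient, so this step needs care: the sum $\rho_-+\rho_+$ of the two ratios must be below $1$.

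This is the main obstacle. I would try to use the curvature directly. By log-concavity, $\log d$ is concave with second derivative roughly $-2r/(q+3/2)^2\approx-2r/(j+3/2)^2\le-2$ near $j$ when $r\ge(j+1)(j+2)$. Hence $\log(d_{j\pm1}/d_j)\le -1+$(linear term), and the linear terms cancel to first order because of the averaging choice of $x$. So $\rho_-+\rho_+\le 2e^{-1}\cosh(\text{small})<1$, giving some margin $\delta$.

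With this dominance, the alternating sum is handled as follows. Log-concavity makes $(d_q)$ decrease after $j$ and increase before $j$. The two alternating tails $\sum_{q>j}(-1)^qd_q$ and $\sum_{q<j}(-1)^qd_q$ are therefore bounded in modulus by $d_{j+1}$ and $d_{j-1}$, with signs opposite to $(-1)^jd_j$. Thus
\[
 (-1)^j\sum_q(-1)^qd_q\ge d_j-d_{j-1}-d_{j+1}\ge\delta d_j .
\]
The perturbation $\sum_q\varepsilon_qd_q\le2\cdot3^{-r}\sum_q d_q$ must then be absorbed. Using geometric decay of $d_q$ away from the peak, $\sum_q d_q\le Cd_j$ with $C$ roughly $(j+1)$-dependent, and $3^{-r}$ with $r\ge(j+1)(j+2)$ kills this.

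Finally, the wheel correction satisfies $|xB_{r-1}(-x)|\le x(1+x)^{r-1}$ by \eqref{wh:binomial-bound}. We compare this with $(1+x)^{2r+1}\delta d_j$, and it suffices that $x\le\delta\Lambda_j^rx^j(1+x)^{r+2}$. For $j\ge1$ this holds because $\Lambda_j^r\ge\Lambda_0^r=4^r$ is enormous, while the factor $x^{j-1}$ is only about $e^{-2r(j-1)\log((j+2)/(j+1))}$. Since $\Lambda_j\approx(4(j+3/2)^2/\pi^2)\ge$ the needed $((j+2)/(j+1))^{2(j-1)}<e^2$, the estimate closes.
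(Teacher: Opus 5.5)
There is a genuine gap. Your architecture matches the paper's: the alternating Ehrhart series at $-x$, the comparison sequence $d_q=\Lambda_q^rx^q$, log-concavity, the alternating-tail bounds, and then absorbing the spectral error and the wheel correction. What you never prove is the decisive step, a uniform margin $\delta$ with $d_{j-1}+d_{j+1}\le(1-\delta)d_j$.

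You abandoned the direct route because of an arithmetic slip. The gap is not of order $(j+1)^{-3}$:
\[
 \log\frac{2j+3}{2j+1}-\log\frac{j+2}{j+1}=\log\Bigl(1+\frac{1}{(2j+1)(j+2)}\Bigr)\asymp j^{-2}.
\]
The intervals $[j-1,j]$ and $[j-\frac12,j+\frac12]$ differ by a half-unit shift, which is a first-derivative effect. Put $u=j+3/2$ and use $\log(1+y)>y/(1+y)$. Then $\log(d_j/d_{j+1})>r/(u(u+1/2))$ and $\log(d_j/d_{j-1})>r/(u(u-1/2))-7r(2u-1)/(32u^2(u-1)^2)$. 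The subtracted cubic term, coming from \eqref{wh:psi-prime}, is at most $\frac{14}{15}\,r/(u(u^2-1/4))$, so it is smaller than the difference of the two main terms. Since $r\ge u^2-1/4$, both logarithms exceed $r/(u(u+1/2))\ge(u-1/2)/u\ge4/5$. Hence each neighbor ratio is below $12/25$, and $\delta=1/25$ works; this is exactly the paper's argument. The same uniform ratio bound gives $\sum_qd_q<3d_j$. For the wheel correction, $x<e^{-4}$ together with $d_j=e^{a}d_{j-1}>2$ suffices, where $a=\log(d_j/d_{j-1})$ and $d_{j-1}\ge d_0=1$.

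Your curvature substitute does not close the gap as written. The second derivative of $L(q)=\log d_q$ is roughly $-2r/(q+3/2)^2$, and this is not $\le-2$ on all of $[j-1,j+1]$. For $j=1$, $r=6$ one finds $L''(2)\approx-0.93$. Accordingly $\log(d_{j+1}/d_j)\le-1+L'(j)$ fails there, since $\log(d_2/d_1)-L'(1)\approx-0.70$. In fact $d_0/d_1+d_2/d_1\approx0.80$ exceeds your claimed bound $2e^{-1}\cosh L'(1)\approx0.75$.

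The conclusion $d_{j-1}+d_{j+1}<d_j$ is true, but your argument does not prove it. It also yields no explicit $\delta$, which you need both to absorb the spectral error and to beat the wheel correction.

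Finally, $\Lambda_0=1$, since it equals $\ehr(\OO(\CC_{2r}),0)$; it is not $4$. So $\Lambda_j^r\ge4^r$ fails at $j=1$. Your later comparison of $\Lambda_j$ with $((j+2)/(j+1))^{2(j-1)}$ can be repaired, but only after $\delta$ is pinned down.
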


\begin{proof}
Since $0<x<1$, \eqref{eq:ehrhart-series} gives the absolutely convergent expansion
\begin{equation}\label{wh:alternating-ehrhart}
 \frac{h_{r,0}(-x)}{(1+x)^{2r+1}}
 =\sum_{q\ge0}(-1)^q\ehr(\OO(\CC_{2r}),q)x^q.
\end{equation}
We first estimate the alternating sum of the approximating terms,
then subtract the spectral error and the wheel correction.
Put $u=j+3/2$ and $w_q=\Lambda_q^r x^q$ for $q\ge0$.
To control the two tails of $(w_q)$, we estimate the ratios of $w_j$
to its neighbors through their logarithms:
\[
 a=\log\frac{w_j}{w_{j-1}},\qquad
 b=\log\frac{w_j}{w_{j+1}}.
\]
By \eqref{wh:psi-prime} and
$\log x=2r\log((u-1/2)/(u+1/2))$,
\[
 \frac ar=\int_{j-1}^j\frac{d}{dq}\log\Lambda_q\,dq+\frac{\log x}{r}>2\log\frac{u(u-1/2)}{(u-1)(u+1/2)} -\frac{7(2u-1)}{32u^2(u-1)^2},
 \]
 \[
 \frac br=-\int_j^{j+1}\frac{d}{dq}\log\Lambda_q\,dq-\frac{\log x}{r}>2\log\frac{u(u+1/2)}{(u-1/2)(u+1)}.
\]
Each numerator exceeds its denominator by $1/2$, so
$\log(1+y)>y/(1+y)$ gives
\[
 \frac ar>\frac1{u(u-1/2)}-\frac{7(2u-1)}{32u^2(u-1)^2},
 \qquad
 \frac br>\frac1{u(u+1/2)}.
\]
The correction in the bound for $a/r$ is smaller than
$1/[u(u^2-1/4)]$, the difference between the two main terms.
Indeed, $u\ge5/2$ gives
\[
 \frac{7(2u-1)(u^2-1/4)}{32u(u-1)^2}
 =\frac7{16}\Bigl(\frac{u-1/2}{u-1}\Bigr)^2\frac{u+1/2}{u}
 \le\frac7{16}\Bigl(\frac43\Bigr)^2\frac65
 =\frac{14}{15}<1.
\]
Therefore,
\[
 \min\{a,b\}>\frac{r}{u(u+1/2)}
 \ge\frac{u-1/2}{u}\ge\frac45,
\]
where we used $r\ge(j+1)(j+2)=u^2-1/4$.

By log-concavity, which follows from \eqref{wh:lambda-derivatives},
the left and right tails, read away from $j$, have
successive ratios at most $e^{-a}$ and $e^{-b}$, respectively. Since
$e^{4/5}>1+4/5+(4/5)^2/2=53/25>25/12$,
both ratios are less than $12/25$.
The alternating-series estimate bounds each tail by its first term:
\begin{equation}\label{wh:dominant-sum}
 \frac{(-1)^j\sum_{q\ge0}(-1)^qw_q}{w_j}
 \ge1-e^{-a}-e^{-b}>\frac1{25}.
\end{equation}
Without the alternating signs, geometric summation gives
\[
 \frac{\sum_{q\ge0}w_q}{w_j}
 \le1+\frac{e^{-a}}{1-e^{-a}}+\frac{e^{-b}}{1-e^{-b}}
 <1+2\frac{12/25}{1-12/25}<3.
\]
Since $r\ge(j+1)(j+2)\ge6$, \eqref{wh:spectral-error} and the triangle inequality give
\begin{equation}\label{wh:series-error}
 \begin{aligned}
 \frac1{w_j}\left|\sum_{q\ge0}(-1)^q
       \bigl(\ehr(\OO(\CC_{2r}),q)-\Lambda_q^r\bigr)x^q\right|
 &\le\frac1{w_j}\sum_{q\ge0}
       \bigl(\ehr(\OO(\CC_{2r}),q)-\Lambda_q^r\bigr)x^q\\
 &<2\cdot3^{-r}\frac{\sum_{q\ge0}w_q}{w_j}<6\cdot3^{-r}<1/100.
 \end{aligned}
\end{equation}

It remains to bound the wheel correction. The inequality $1-y<e^{-y}$ gives
\[
 x=\Bigl(1-\frac1{j+2}\Bigr)^{2r}
 <e^{-2r/(j+2)}\le e^{-4}.
\]
Moreover, log-concavity and $a>0$ imply $w_{j-1}\ge w_0=1$, so
$w_j=e^a w_{j-1}>2$.
Together with \eqref{wh:binomial-bound}, these bounds give
\begin{equation}\label{wh:endpoint-correction}
 \frac{|xB_{r-1}(-x)|}{(1+x)^{2r+1}w_j}
 \le\frac{x}{(1+x)^{r+2}w_j}<\frac12e^{-4}<\frac1{100}.
\end{equation}
Here $e>8/3$. Add and subtract the approximating series in
\eqref{wh:alternating-ehrhart}, then apply \eqref{wh:dominant-sum},
\eqref{wh:series-error}, and \eqref{wh:endpoint-correction}:
\begin{align*}
 &\frac{(-1)^jh_{r,0}(-x)-|xB_{r-1}(-x)|}{(1+x)^{2r+1}w_j}\\
 &=\frac{(-1)^j}{w_j}\sum_{q\ge0}(-1)^qw_q
   -\frac{|xB_{r-1}(-x)|}{(1+x)^{2r+1}w_j}+\frac{(-1)^j}{w_j}\sum_{q\ge0}(-1)^q
       \bigl(\ehr(\OO(\CC_{2r}),q)-\Lambda_q^r\bigr)x^q\\
 &>\frac1{25}-\frac1{100}-\frac1{100}=\frac1{50}>0.\qedhere
\end{align*}
\end{proof}

\subsection{The conjugate-pole estimate}\label{wh:central-range}

For the indices $r<(j+1)(j+2)$, we use the phase of a conjugate pole
pair to control cancellation. The reflection symmetry of the counting
polynomial expresses $h_{r,0}$ in terms of hyperbolic secant derivatives,
a representation valid even at $t=-1$.
We first select the comparison points to fix the phase, then bound
the lower-order derivatives and the more distant poles.

By \eqref{eq:count-reflection}, the crown Ehrhart polynomial is
even in $q+3/2$. Write
\begin{equation}\label{wh:shifted-count}
 \ehr(\OO(\CC_{2r}),q)=\sum_{i=0}^r c_{r,i}(q+3/2)^{2r-2i}. 
\end{equation}
For $s>0$, we have
\[
 g(s):=\frac1{2\cosh(s/2)}
     =\sum_{k\ge0}(-1)^ke^{-(k+1/2)s}.
\]
On compact subintervals of $(0,\infty)$, this series and all its derivatives
converge uniformly. Therefore, termwise differentiation and
\eqref{wh:shifted-count} give
\[
 \sum_{i=0}^r c_{r,i}g^{(2r-2i)}(s)
 =\sum_{k\ge0}(-1)^k\ehr(\OO(\CC_{2r}),k-1)e^{-(k+1/2)s}.
\]
The term $k=0$ vanishes by \eqref{eq:count-reflection}.
Shift the index and insert \eqref{eq:ehrhart-series}:
\begin{equation}\label{wh:logistic}
 h_{r,0}(-e^{-s})=-(1+e^{-s})^{2r+1} e^{3s/2}
       \sum_{i=0}^r c_{r,i}g^{(2r-2i)}(s).
\end{equation}
Continuity extends this identity to $s=0$.
For the derivatives, write
\[
 g(s)=\frac12\csc\frac{\pi+\mathrm{i}s}{2}.
\]
By the partial-fraction expansion for $\csc$
\cite[equation~4.22.5]{DLMF}, the poles are
$\pm\mathrm{i}(2k+1)\pi$, with residues $\mp\mathrm{i}(-1)^k$.
For $m\ge1$, the differentiated summands are $O((k+1)^{-m-1})$
uniformly for real $s$. Thus, termwise differentiation of the locally
uniformly convergent paired expansion and regrouping conjugate poles give
\begin{equation}\label{wh:real-pf}
 g^{(m)}(s)=2(-1)^{m+1}m!\operatorname{Im}
       \sum_{k\ge0}\frac{(-1)^k}
       {(s+\mathrm{i}(2k+1)\pi)^{m+1}},\qquad m\ge1.
\end{equation}
In \eqref{wh:logistic}, the derivative orders are at least two
for $i<r$.

The highest derivative in \eqref{wh:logistic} produces poles of order
$2r+1$. To fix the phase of this highest-order contribution, choose
\[
 \theta_j=\frac{\pi(j+3/2)}{2r+1},\qquad
 s=\pi\cot\theta_j,\qquad \rho=\sqrt{s^2+\pi^2},
 \qquad 1\le j\le r-1.
\]
Then $s+\mathrm{i}\pi=\rho e^{\mathrm{i}\theta_j}$ and
$e^{-\mathrm{i}(2r+1)\theta_j}=(-1)^j\mathrm{i}$.
For $r\ge4$ and $r<(j+1)(j+2)$, these points satisfy $\rho^2<5r$.
Indeed, put $u=j+3/2$. Since $r$ is an integer,
$r\le u^2-5/4$, and hence $r+1+1/(4r)<u^2$.
If $\theta_j\ge\pi/4$, then $r\sin^2\theta_j\ge r/2\ge2$.
Otherwise, the chord bound for sine on $[0,\pi/4]$ gives
\[
 r\sin^2\theta_j\ge\frac{8ru^2}{(2r+1)^2}
 =\frac{2u^2}{r+1+1/(4r)}>2.
\]
In either case,
\begin{equation}\label{wh:pole-range}
 \frac{\rho^2}{r}=\frac{\pi^2}{r\sin^2\theta_j}\le\frac{\pi^2}{2}<5.
\end{equation}
We now bound the lower-order contributions relative to the highest
derivative in \eqref{wh:logistic}. To account for the factorials in
\eqref{wh:real-pf}, define
\begin{equation}\label{wh:odd-moments}
 \sigma_k:=\sum_{i\ge0}(2i+1)^{-2k},\qquad
 V_k:=(2k)!\Bigl(\frac2\pi\Bigr)^{2k}\sigma_k,\qquad k=1,2,\ldots.
\end{equation}
Normalize the coefficients $c_{r,i}$ by
\begin{equation}\label{wh:coefficient-normalization}
\widehat c_{r,i}:=\frac{(2r-2i)!c_{r,i}}{V_r}.
\end{equation}
By \eqref{eq:odd-sum} and \eqref{eq:crown-count},
$V_r=(2r)!c_{r,0}$ and $\widehat c_{r,0}=1$.
Moreover, $c_{r,i}>0$ for $0\le i<r$, whereas $c_{r,r}=-1/(2\cdot4^r)$.
We treat this constant term separately and set
\begin{equation}\label{wh:coefficient-polynomial}
 F_r(x):=\sum_{i=0}^{r-1}\widehat c_{r,i}x^i.
\end{equation}
The normalized contribution of the nearest pole pair is expressed through
$\operatorname{Re}F_r((s+\mathrm{i}\pi)^2)$.
For $r\ge4$ and $r<(j+1)(j+2)$, \eqref{wh:pole-range} gives
$|(s+\mathrm{i}\pi)^2|=\rho^2<5r$ at the comparison points.
We will retain the linear term when estimating this real part;
a geometric bound suffices for the higher terms.

\begin{lemma}\label{wh:coefficients}
For every integer $r\ge4$,
\begin{equation}\label{wh:coefficient-bounds}
 \widehat c_{r,1}\le\frac1{17r},\qquad
 (5r)^i\widehat c_{r,i}
 \le\frac5{32}\Bigl(\frac38\Bigr)^{i-2},
 \qquad 2\le i<r.
\end{equation}
\end{lemma}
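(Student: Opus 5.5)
The plan is to compute the normalized coefficients $\widehat c_{r,i}$ explicitly from \autoref{lem:laurent} and then bound each factor separately. Put $x=2q+3=2(q+3/2)$ in \eqref{eq:odd-sum} and compare with \eqref{eq:crown-count} and \eqref{wh:shifted-count}. For $0\le i<r$ and $k=r-i$ this gives
\[
 c_{r,i}=\frac{a_{r,k}\,\sigma_k}{4^r}\Bigl(\frac4\pi\Bigr)^{2k},
 \qquad\text{hence}\qquad
 \widehat c_{r,i}=\frac{a_{r,r-i}}{4^{i}}\cdot\frac{V_{r-i}}{V_r}
 =\frac{a_{r,r-i}}{4^i}\Bigl(\frac\pi2\Bigr)^{2i}
   \frac{(2r-2i)!}{(2r)!}\,\frac{\sigma_{r-i}}{\sigma_r}.
\]
The case $i=0$ gives $a_{r,r}=1$ and $\widehat c_{r,0}=1$, consistent with the paper.

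The number $a_{r,r-i}$ is the coefficient of $x^{2i}$ in the power series $(x/\sin x)^{2r}$. The ratio $\sigma_{r-i}/\sigma_r$ lies between $1$ and $\sigma_1=\pi^2/8$, and it is $1+O(9^{-(r-i)})$ when $r-i$ is large.

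\textbf{The linear term.} Since $(x/\sin x)^{2r}=1+\tfrac r3x^2+O(x^4)$, we have $a_{r,r-1}=r/3$. Therefore
\[
 \widehat c_{r,1}=\frac{\pi^2}{96(2r-1)}\cdot\frac{\sigma_{r-1}}{\sigma_r}.
\]
For $r\ge5$, the bound $\frac{2r}{2r-1}\le\frac{10}{9}$ gives $\widehat c_{r,1}\le\frac{\pi^2}{192r}\cdot\frac{10}{9}(1+3^{-2r+2})<\frac1{17r}$ with room to spare. The case $r=4$ is extremely tight: the bound is numerically about $0.05882$ against $1/17\approx0.058824$. There I would evaluate $\sigma_3/\sigma_4$ exactly through $\sigma_k=(1-2^{-2k})\zeta(2k)$, check the inequality by a direct rational computation, and accept that this is the delicate spot of the statement.

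\textbf{Higher terms, $2\le i<r$.} I would bound $a_{r,r-i}$ by Cauchy's estimate on the circle $|x|=\rho$ with $0<\rho<\pi$. On this circle the maximum of $|x/\sin x|$ is attained on the real axis, because $\sin x/x=\prod(1-x^2/(n\pi)^2)$ has modulus minimized at real $x$. This gives
\[
 a_{r,r-i}\le\Bigl(\frac{\rho}{\sin\rho}\Bigr)^{2r}\rho^{-2i}.
\]
Choose $\rho^2$ comparable to $3i/r$, capped at a fixed value such as $\rho=\pi/2$ when $i$ is a large fraction of $r$. Using $\log(\rho/\sin\rho)\le\rho^2/6+c\rho^4$, this yields $a_{r,r-i}\lesssim(e r/(3i))^i$.

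For the factorial ratio I would use $(2r)!/(2r-2i)!\ge(2r-2i+1)^{2i}$, or more sharply $(2r)^{2i}\prod_{m<2i}(1-m/(2r))$. Multiplying everything by $(5r)^i$, the main-term powers of $r$ cancel and leave roughly $(5\pi^2 e/(192 i))^i$ times a correction factor. At $i=2$ this is about $0.05$, comfortably below $5/32$. The ratio of consecutive bounds in $i$ is at most about $0.14<3/8$, which produces the geometric factor $(3/8)^{i-2}$.

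\textbf{Main obstacle.} The hardest step is making the higher-term estimate uniform when $i$ is close to $r$. There the factorial ratio $(2r-2i)!/(2r)!$ loses its $(2r)^{-2i}$ behaviour, and the optimal Cauchy radius is no longer small. I would split into two ranges. For $i\le r/2$, I would use the small-radius choice together with $\prod(1-m/2r)\ge e^{-i}$-type bounds. For $i>r/2$, I would use the fixed radius $\rho=\pi/2$, which gives $a_{r,r-i}\le(\pi/2)^{2r-2i}$. Then $(5r)^i(\pi/4)^{2i}(2r-2i)!/(2r)!$ decays superexponentially by Stirling, since $(2r)!/(2r-2i)!\ge(r)^{2i}$ in this range. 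The case $r=4$ would be verified directly.
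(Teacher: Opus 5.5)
\textbf{Linear term.} Your treatment of the linear term is sound. The formula $\widehat c_{r,1}=\pi^2\sigma_{r-1}/(96(2r-1)\sigma_r)$ is correct, and the case $r\ge5$ goes through as you describe. At $r=4$, the exact values $\sigma_3=\pi^6/960$ and $\sigma_4=17\pi^8/161280$ give $4\widehat c_{4,1}=1/17$ exactly, so this case is an equality rather than a delicate numerical check.

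\textbf{The gap: $2\le i<r$.} Here Cauchy's estimate loses a constant factor that the constants $5/32$ and $3/8$ cannot absorb.

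\emph{Fixed $i$, large $r$.} For fixed $i$ one has $a_{r,r-i}\sim(r/3)^i/i!$, whereas the best Cauchy bound is $\sim e^i(r/(3i))^i$. That is a loss of $e^i i!/i^i\approx\sqrt{2\pi i}$. Consequently your main term at $i=2$ is $(5\pi^2e/384)^2\approx0.12$, not $0.05$; the true limit is $(5\pi^2/192)^2/2\approx0.033$.

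\emph{Moderate $r$.} The true values approach that limit slowly from above: about $0.086$, $0.068$, $0.060$ for $r=4,5,6$. So even the optimal radius fails for every $r$ up to about $14$. Concretely, $(x/\sin x)^{10}=1+\frac53x^2+\frac{13}9x^4+\cdots$ gives $a_{5,3}=13/9$, while $\min_{0<\rho<\pi}(\rho/\sin\rho)^{10}\rho^{-4}\approx5.55$. In your exact formula this only yields $25^2\widehat c_{5,2}\le0.26$ approximately, which is not below $5/32$.

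\emph{Near the diagonal.} Here the loss grows like $\sqrt r$. We have $a_{r,1}=4^{r-1}((r-1)!)^2/(2r-1)!\approx\sqrt{\pi/(4r)}$, whereas the best Cauchy bound for it tends to $\pi^2/4$; already at $r=5$ it is about $2.26$ against $a_{5,1}\approx0.406$. This gives about $0.087$ at $(r,i)=(5,4)$ against the required $45/2048\approx0.022$, and about $0.026$ at $(6,5)$ against $0.0082$. Even the optimal radius still fails at $(8,7)$.

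Separately, your inequality $(2r)!/(2r-2i)!\ge r^{2i}$ for $i>r/2$ is false when $i$ is near $r$ (e.g.\ $r=11$, $i=10$). So verifying only $r=4$ directly does not close the argument. You would need an exact check over all $4\le r\lesssim15$ and fully explicit correction factors beyond that, and neither is in the proposal.

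\textbf{The missing idea.} You need an exact handle on $a_{r,r-i}$ that loses nothing at the extremal rank. The paper proceeds as follows.
\begin{enumerate}
\item Iterate $(d^2/dx^2+4k^2)$ on $\csc^2$ to get $a_{r,r-i}=4^i(2r-2i-1)!\,e_{r,i}/(2r-1)!$, where $e_{r,i}$ is the $i$th elementary symmetric function of $1^2,\dots,(r-1)^2$.
\item Bound $e_{r,i}$ by an explicit $d_{r,i}$ that is exact at $i=r-1$.
\item Show that the resulting majorant of $(5r)^i\widehat c_{r,i}$ decreases in $r$, using $\sigma_k^2\le\sigma_{k-1}\sigma_{k+1}$. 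Hence it is at most its value $\kappa_i$ at $r=i+1$, where it equals the true coefficient.
\item Use log-concavity of $(\kappa_i)$, with $\kappa_1=5/12$ and $\kappa_2=5/32$, to get $\kappa_i\le\frac5{32}(3/8)^{i-2}$.
\end{enumerate}
The lemma's constants are thus values of this majorant at its extremal case $r=i+1$: $\kappa_2=5/32$ at $r=3$, and $3/8=\kappa_2/\kappa_1$. That is why an estimate losing a fixed factor, such as Cauchy's, cannot reach them without a substantial finite computation.
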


\begin{proof}
We first express the normalized coefficients in terms of elementary
symmetric functions, then obtain a bound that decreases with $r$.
For $r\ge3$, let $e_{r,i}$ be the $i$th elementary symmetric function of
$1^2,\ldots,(r-1)^2$, with $e_{r,0}=1$.
Iterating \eqref{eq:cosecant-differential} expresses $\csc^{2r}$ as
\[
 \csc^{2r}x=\frac1{(2r-1)!}
  \prod_{k=1}^{r-1}\Bigl(\frac{d^2}{dx^2}+4k^2\Bigr)\csc^2x.
\]
Only the term $x^{-2}$ of $\csc^2x$ contributes negative powers. Thus
\[
 a_{r,r-i}=\frac{4^i(2r-2i-1)!}{(2r-1)!}e_{r,i},\qquad 0\le i<r.
\]
Combining this coefficient formula with \eqref{eq:odd-sum} and
\eqref{eq:crown-count}, and using \eqref{wh:odd-moments}, gives
\[
 c_{r,i}=\frac{a_{r,r-i}}{4^r}
         \Bigl(\frac4\pi\Bigr)^{2r-2i}\sigma_{r-i}
        =\frac{V_{r-i}e_{r,i}}{2(r-i)(2r-1)!},\qquad 0\le i<r.
\]
Hence, the normalization \eqref{wh:coefficient-normalization} yields
\begin{equation}\label{wh:a-exact}
 \widehat c_{r,i}
 =\frac{(2r-2i)!}{(2r)!}
   \frac{r}{r-i}\frac{V_{r-i}}{V_r}e_{r,i},
 \qquad 0\le i<r.
\end{equation}

We bound $e_{r,i}$ in a form that cancels the factorial quotient
in \eqref{wh:a-exact}:
\begin{equation}\label{wh:symmetric-bound}
 e_{r,i}\le d_{r,i}=
 \frac{(2r)!}{(2r-2i)!}\binom{r-1}{i}
       \frac{2(i!)^2}{(2i+2)!},\qquad 0\le i<r.
\end{equation}
The bound agrees with $e_{r,i}$ at $i=0$, $i=1$, and $i=r-1$.
We verify \eqref{wh:symmetric-bound} by induction on $r$.
Set $e_{r,i}=d_{r,i}=0$ for $i\ge r$.
The bound is an equality for $r=3$.
The recurrence $e_{r+1,i}=e_{r,i}+r^2e_{r,i-1}$ and the boundary
values $e_{r,0}=d_{r,0}=1$ reduce the induction step to
$d_{r+1,i}\ge d_{r,i}+r^2d_{r,i-1}$ for $1\le i\le r$.
The two quotients simplify to
\[
 \frac{d_{r,i}}{d_{r+1,i}}
 =\frac{(r-i)(r-i+1)(2r-2i+1)}{r(r+1)(2r+1)},\qquad
 \frac{r^2d_{r,i-1}}{d_{r+1,i}}
 =\frac{r(i+1)(2i+1)}{i(r+1)(2r+1)}.
\]
Subtracting these quotients from one and factoring at $i=1,r$ gives
\[
 \frac{d_{r+1,i}-d_{r,i}-r^2d_{r,i-1}}{d_{r+1,i}}
 =\frac{(r-i)(i-1)\bigl(2i(2r-i)+i+r\bigr)}
        {i r(r+1)(2r+1)}\ge0,
 \qquad 1\le i\le r.
\]
This completes the induction.

To simplify the resulting bound in \eqref{wh:a-exact}, pair the even
and odd factors in $(2r)!/(2r-2i)!$:
\[
 \frac{r}{r-i}\binom{r-1}{i}\frac{(2r-2i)!}{(2r)!}
 =\binom ri\frac{(2r-2i)!}{(2r)!}
 =\frac1{2^i i!}\prod_{j=0}^{i-1}\frac1{2r-2i+2j+1}.
\]
Substituting \eqref{wh:symmetric-bound} and the expression for $V_k$
in \eqref{wh:odd-moments} into \eqref{wh:a-exact} now yields
\begin{equation}\label{wh:coefficient-majorant}
 (5r)^i\widehat c_{r,i}
 \le\frac{2i!}{(2i+2)!}
 \Bigl(\frac{5\pi^2}{8}\Bigr)^i
 \frac{\sigma_{r-i}}{\sigma_r}
 \prod_{j=0}^{i-1}\frac{r}{2r-2i+2j+1}.
\end{equation}
For fixed $i\ge1$, each factor $r/(2r-2i+2j+1)$ decreases as $r$ increases.
Moreover, the Cauchy--Schwarz inequality
$\sigma_{k+1}^2\le\sigma_k\sigma_{k+2}$, valid for $k\ge1$, shows that
$\sigma_k/\sigma_{k+1}$ is nonincreasing in $k$.
Therefore, the following product is nonincreasing in $r$:
\[
 \frac{\sigma_{r-i}}{\sigma_r}
 =\prod_{k=r-i}^{r-1}\frac{\sigma_k}{\sigma_{k+1}}.
\]
Thus, the right-hand side of \eqref{wh:coefficient-majorant}
decreases with $r\ge\max\{3,i+1\}$.

For the numerical constants, take $r=1$ in
\eqref{eq:partial-fractions}, replace $x$ by $x+\pi/2$, and
differentiate termwise $2k-2$ times at zero. This gives
\[
 \tan^{(2k-1)}(0)
 =2(2k-1)!\Bigl(\frac2\pi\Bigr)^{2k}\sigma_k
 =\frac{V_k}{k},\qquad k\ge1.
\]
Successive differentiation of $\tan'=1+\tan^2$ yields
\begin{equation}\label{wh:moment-values}
 V_1=1,\qquad V_2=4,\qquad V_3=48,\qquad V_4=1088.
\end{equation}
At $i=1$, equality in \eqref{wh:symmetric-bound} allows us to compare with $r=4$: $r\widehat c_{r,1}\le4\widehat c_{4,1}=1/17$, since $e_{4,1}=14$ and $V_3/V_4=3/68$. For $i\ge2$, evaluate \eqref{wh:coefficient-majorant} at $r=i+1$. Equality holds in \eqref{wh:symmetric-bound} at this rank, so $(5r)^i\widehat c_{r,i}\le\kappa_i$, where $\kappa_i=\frac{5^i(i+1)^i(i!)^2}{(2i+1)!V_{i+1}}$ for $i\ge1$.
To bound this sequence geometrically, use \eqref{wh:odd-moments} to compare consecutive terms:
\[
 \frac{\kappa_{i+1}}{\kappa_i}
 =\frac{5\pi^2}{16}
   \frac{(i+2)^i}{(i+1)^{i-1}(2i+3)^2}
   \frac{\sigma_{i+1}}{\sigma_{i+2}}.
\]
The ratio decreases with $i$, since
\[
 \frac{\kappa_{i+2}\kappa_i}{\kappa_{i+1}^2}=\Bigl(1-\frac1{(i+2)^2}\Bigr)^{i-1}\Bigl(1-\frac1{(i+2)(2i+5)}\Bigr)^2\frac{\sigma_{i+2}^2}{\sigma_{i+1}\sigma_{i+3}}<1,\qquad i\ge1.
\]
The last factor is at most one by the same Cauchy--Schwarz inequality. Since $\kappa_1=5/12$ and $\kappa_2=5/32$, log-concavity gives $\kappa_{i+1}/\kappa_i\le\kappa_2/\kappa_1=3/8$ for $i\ge1$.
Iteration yields $\kappa_i\le(5/32)(3/8)^{i-2}$ for $i\ge2$, as required.
\end{proof}

We next estimate the remaining poles, the constant term, and the wheel
correction relative to the highest-order term at the nearest pole pair.
For an integer $r\ge4$ and $s\ge0$, put $\rho=\sqrt{s^2+\pi^2}$ and
\[
 y_k=|s+\mathrm{i}(2k+1)\pi|^2
     =\rho^2+4\pi^2k(k+1),\qquad k\ge1.
\]
The corresponding error bounds are
\begin{equation}\label{wh:error-definitions}
 E_{r,1}(s):=\sum_{k\ge1}\Bigl(\frac{\rho^2}{y_k}\Bigr)^{r+1/2}F_r(y_k),\; E_{r,2}(s):=\frac{\rho^{2r+1}}{8\cdot4^rV_r\cosh(s/2)},\; E_{r,3}(s):=\frac{\rho^{2r+1}e^{-5s/2}}{2V_r(1+e^{-s})^{r+2}}.
\end{equation}
Since $\deg F_r=r-1$, the summands of $E_{r,1}(s)$ are $O(k^{-3})$,
so the series converges.

\begin{lemma}\label{wh:error-bounds}
Let $r\ge4$ be an integer and $s\ge0$ satisfy $s^2+\pi^2\le5r$. Then
\begin{equation}\label{wh:error-estimates}
 E_{r,1}(s)<\frac16,\qquad E_{r,2}(s)<\frac15,\qquad
 E_{r,3}(s)<\frac13.
\end{equation}
\end{lemma}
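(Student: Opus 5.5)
We treat the three bounds in \eqref{wh:error-estimates} separately. All three use the hypothesis $\rho^2=s^2+\pi^2\le5r$, together with the lower bound $V_r\ge(2r)!(2/\pi)^{2r}$ that follows from $\sigma_r\ge1$ in \eqref{wh:odd-moments}.

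\emph{The bound on $E_{r,1}$.} We bound $F_r(y_k)$ termwise with \autoref{wh:coefficients}. Since $\widehat c_{r,0}=1$,
\[
F_r(y_k)\le 1+\frac{y_k}{17r}+\frac5{32}\sum_{i\ge2}\Bigl(\frac{y_k}{5r}\Bigr)^i\Bigl(\frac38\Bigr)^{i-2}.
\]
This is not immediately geometric in $y_k$, because $y_k$ may exceed $5r$. So we keep the prefactor $(\rho^2/y_k)^{r+1/2}$ and set $z_k=y_k/\rho^2\ge1+8k(k+1)\pi^2/(5r)$. Using $\rho^2\le5r$, each term with $i\le r-1$ satisfies
\[
\Bigl(\frac{y_k}{5r}\Bigr)^i z_k^{-r-1/2}\le z_k^{\,i-r-1/2}\le z_k^{-3/2}.
\]
The geometric factor $(3/8)^{i-2}$ makes the sum over $i$ at most $\tfrac5{32}\cdot\tfrac85=\tfrac14$, so the $i\ge2$ part contributes at most $\tfrac14 z_k^{-3/2}$. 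The $i=0,1$ parts contribute at most $(1+\tfrac5{17})z_k^{-3/2}$. Next, $\rho^2\ge\pi^2$ gives $z_k\ge1+4k(k+1)$, so $z_k\ge(2k+1)^2$ and $z_k^{-3/2}\le(2k+1)^{-3}$. This yields
\[
E_{r,1}\le\Bigl(1+\tfrac5{17}+\tfrac14\Bigr)\sum_{k\ge1}(2k+1)^{-3}\approx1.54\cdot0.0518\approx0.08<\tfrac16.
\]
If the bound for the $i\ge2$ terms turns out tighter than expected, we instead use $z_k^{-r-1/2+i}\le z_k^{-3/2}$ only for $i=r-1$ and exploit larger negative exponents for the rest. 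I expect this bookkeeping, in particular handling $y_k>5r$ uniformly in $i$, to be the main obstacle.

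\emph{The bound on $E_{r,2}$.} Using $\cosh(s/2)\ge1$ and $\rho^{2r+1}\le(5r)^{r+1/2}$, we get
\[
E_{r,2}\le\frac{(5r)^{r+1/2}\pi^{2r}}{8\cdot16^r(2r)!}.
\]
Stirling's bound $(2r)!\ge(2r/e)^{2r}\sqrt{4\pi r}$ turns this into a constant times $\bigl(5e^2\pi^2/(64r)\bigr)^r$. The base is below $1$ once $r\ge6$, and the whole quantity is decreasing from $r=4$ on. So it suffices to check $r=4$ numerically with $V_4=1088$: here $E_{r,2}\le20^{4.5}/(8\cdot256\cdot1088)\approx0.3$, which is not quite enough. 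We therefore keep $\cosh(s/2)$ and $\rho^2=s^2+\pi^2$ together: for $r=4$, maximize $(s^2+\pi^2)^{4.5}/\cosh(s/2)$ over $s^2\le20-\pi^2$. At the endpoint $s\approx3.2$ we have $\cosh\approx2.5$, which halves the value. The cases $r=4,5$ are verified this way using the exact $V_4,V_5$, and the Stirling bound handles larger $r$.

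\emph{The bound on $E_{r,3}$.} Here we use $e^{-5s/2}/(1+e^{-s})^{r+2}\le e^{-5s/2}$ and maximize $(s^2+\pi^2)^{r+1/2}e^{-5s/2}$ under $s^2\le5r-\pi^2$. Since $e^{-5s/2}$ decays fast, for large $r$ it is again dominated by $(5r)^{r+1/2}/V_r$, which the Stirling estimate above makes tiny. The small cases $r=4,5,6$ are checked directly against the explicit $V_r$; $V_5$ and $V_6$ are computed from $\tan'=1+\tan^2$ as in \eqref{wh:moment-values}.
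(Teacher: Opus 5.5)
Your treatment of $E_{r,2}$ works by a different route from the paper's. Stirling already gives about $0.15$ at $r=5$, and for $r=4$ the function $(s^2+\pi^2)^{9/2}/\cosh(s/2)$ is increasing on the allowed range, which gives about $0.13$. The paper gets this more cleanly from $\cosh(s/2)\ge1+s^2/8>\rho^2/10$. The bounds on $E_{r,1}$ and $E_{r,3}$, however, have genuine gaps.

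\textbf{The bound on $E_{r,1}$.} The step ``$\rho^2\ge\pi^2$ gives $z_k\ge1+4k(k+1)$'' is backwards. Since $z_k=1+4\pi^2k(k+1)/\rho^2$, the inequality $\rho^2\ge\pi^2$ gives $z_k\le(2k+1)^2$, with equality only at $s=0$. The only lower bound the hypothesis provides is $z_k\ge1+4\pi^2k(k+1)/(5r)$ (with $4\pi^2$, not $8\pi^2$), and it tends to $1$ for fixed $k$ as $r\to\infty$. Once you flatten every exponent $i-r-\tfrac12$ to $-\tfrac32$, the sum $\sum_k z_k^{-3/2}$ is of order $\sqrt r$. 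Already at $r=4$ with $\rho^2=20$, your majorant is about $1.544\cdot0.13\approx0.20>\tfrac16$.

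The missing idea is to keep the large exponent $r-i+\tfrac12$ for small $i$ and trade it against the $1/r$. Put $\beta=4\pi^2k(k+1)/5$. The identity $2(i+1)(r-i+\tfrac12)-3r=(2i-1)(r-i-1)\ge0$ and Bernoulli's inequality give $(1+\beta/r)^{r-i+1/2}\ge(1+\beta/(i+1))^{3/2}$ for $1\le i<r$. For $i=0$, one has $(1+\beta/r)^{r+1/2}\ge(1+\beta/4)^4\ge\beta^{3/2}$. This bounds the contribution of coefficient $i$ by $b_i\bigl(5(i+1)/(4\pi^2)\bigr)^{3/2}\sum_{k\ge1}[k(k+1)]^{-3/2}$, independently of $r$. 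The geometric decay of $b_i$ then absorbs the factor $(i+1)^{3/2}$. Your closing remark about larger negative exponents points this way, but the argument as written rests on the reversed inequality.

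\textbf{The bound on $E_{r,3}$.} Discarding $(1+e^{-s})^{-(r+2)}$ loses a factor $2^{r+2}$ at $s=0$. For $r=4,5,6$, that is exactly where $(s^2+\pi^2)^{r+1/2}e^{-5s/2}$ is largest. Your majorant there equals $\pi^{2r+1}/(2V_r)$, which is about $13.7$, $3.7$, and $0.68$ for $r=4,5,6$ (using $V_4=1088$, $V_5=39680$, $V_6=2122752$). So the promised direct checks fail.

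Your large-$r$ remark is also inaccurate. The $E_{r,2}$ estimate had an extra factor $8\cdot4^r$ in the denominator; without it, $(5r)^{r+1/2}/V_r$ is of size $(5\pi^2e^2/(16r))^r\approx(22.8/r)^r$ up to a bounded factor. This is huge for moderate $r$, and only $e^{-5s/2}$ at the right endpoint rescues it.

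You must keep the denominator, for instance via
$e^{-5s/2}(1+e^{-s})^{-(r+2)}=e^{(r-3)s/2}/\bigl(2^{r+2}\cosh^{r+2}(s/2)\bigr)$.
The paper does this at $r=4$. It uses $\rho^2\le\pi^2\cosh(s/2)$ and the AM--GM bound $e^{s/2}\cosh^{-3/2}(s/2)<\tfrac32$ to get $E_{4,3}<\tfrac13$. It then shows $E_{r,3}(s)/E_{4,3}(s)=(V_4/V_r)\bigl(\rho^2/(1+e^{-s})\bigr)^{r-4}<1$ for $r\ge5$, using $(2r)!/8!\ge(18r)^{r-4}$.
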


\begin{proof}
\noindent\emph{The remaining poles.}
Put $b_0=1$, $b_1=5/17$, and
$b_i=(5/32)(3/8)^{i-2}$ for $i\ge2$.
The identity $\widehat c_{r,0}=1$ and \eqref{wh:coefficient-bounds} give
$\widehat c_{r,i}(5r)^i\le b_i$ for $0\le i<r$.
For fixed $i,k$, both $\rho^{2i}$ and $\rho^2/y_k$ are nondecreasing
in $\rho^2$. Expanding $F_r$ by \eqref{wh:coefficient-polynomial}
in \eqref{wh:error-definitions} and using $\rho^2\le5r$ gives
\[
 E_{r,1}(s)=\sum_{i=0}^{r-1}\widehat c_{r,i}\rho^{2i}\sum_{k\ge1}\Bigl(\frac{\rho^2}{y_k}\Bigr)^{r-i+1/2}\le\sum_{i=0}^{r-1}b_i\sum_{k\ge1}\Bigl(1+\frac{4\pi^2k(k+1)}{5r}\Bigr)^{-(r-i+1/2)}.
\]
To bound these powers, fix $\beta\ge1$.
For $1\le i<r$, the identity
$2(i+1)(r-i+1/2)-3r=(2i-1)(r-i-1)\ge0$
and Bernoulli's inequality give
\[
 \Bigl(1+\frac\beta r\Bigr)^{r-i+1/2}
 \ge\Bigl(1+\frac\beta r\Bigr)^{3r/[2(i+1)]}
 \ge\Bigl(1+\frac\beta{i+1}\Bigr)^{3/2}.
\]
For $i=0$, use Bernoulli's inequality with exponent $r/4$ and
$(1+\beta/4)^2-\beta=(\beta-4)^2/16\ge0$:
\[
 \Bigl(1+\frac\beta r\Bigr)^{r+1/2}
 \ge\Bigl(1+\frac\beta4\Bigr)^4
 \ge\beta^2\ge\beta^{3/2}.
\]
Thus, with $\beta=4\pi^2k(k+1)/5>1$, the contribution of coefficient $i$
is at most
\[
 b_i\Bigl(\frac{5(i+1)}{4\pi^2}\Bigr)^{3/2}
     \sum_{k\ge1}[k(k+1)]^{-3/2}.
\]
The inequality $2\sqrt{k(k+1)}<2k+1$ makes this tail telescope:
\[
 \sum_{k\ge1}[k(k+1)]^{-3/2}
 <\frac12\sum_{k\ge1}\Bigl(\frac1{k^2}-\frac1{(k+1)^2}\Bigr)
 =\frac12.
\]
Since $\pi>3$ and $5\sqrt5<12$, we have
\[
 \frac12\Bigl(\frac5{4\pi^2}\Bigr)^{3/2}
 <\frac{5\sqrt5}{432}<\frac1{36}.
\]
Thus, each coefficient contributes less than
$b_i(i+1)^{3/2}/36\le b_i(i+1)^2/36$, including $i=0$.
Write $(i+1)^2=(i-2)(i-3)+7(i-2)+9$ for $i\ge2$.
The geometric series and its first two derivatives give
\[
 \sum_{i\ge2}b_i(i+1)^2
 =\frac5{32}\left(
   \frac{2(3/8)^2}{(1-3/8)^3}
   +\frac{7(3/8)}{(1-3/8)^2}
   +\frac9{1-3/8}\right)=\frac{87}{25}.
\]
Therefore,
\[
 E_{r,1}(s)<\frac1{36}\sum_{i\ge0}b_i(i+1)^2
 =\frac1{36}\Bigl(1+\frac{20}{17}+\frac{87}{25}\Bigr)<\frac16.
\]

\noindent\emph{The constant term.}
Combine $\cosh(s/2)\ge1+s^2/8>\rho^2/10$
with $\rho^2\le5r$, $V_r\ge(2r)!(2/\pi)^{2r}$, and $\pi^2<10$:
\[
 E_{r,2}(s)<\frac{5\rho^{2r-1}}{4\cdot4^rV_r}\le\frac5{4\sqrt{5r}}\frac{(5\pi^2r/16)^r}{(2r)!}<\frac5{4\sqrt{5r}}\frac{(25r/8)^r}{(2r)!}.
\]
Set $\zeta_r=\frac5{4\sqrt{5r}}\frac{(25r/8)^r}{(2r)!}$. Then
\[
 \frac{\zeta_{r+1}}{\zeta_r}=\frac{25}{16(2r+1)}\sqrt{\frac r{r+1}}\Bigl(1+\frac1r\Bigr)^r<\frac{75}{16(2r+1)}<1,\qquad r\ge4,
\]
where $(1+1/r)^r<e<3$. Thus $E_{r,2}(s)<\zeta_r\le\zeta_4 =(\sqrt5/8)(25/2)^4/8!<1/5$.

\noindent\emph{The wheel correction.}
The inequalities
$\cosh(s/2)\ge1+s^2/8$ and $\pi^2>8$ imply
$\rho^2\le\pi^2\cosh(s/2)$. For $r=4$, the resulting bound for $E_{4,3}(s)$ is
\[
 E_{4,3}(s)=\frac{\rho^9e^{s/2}}{128V_4\cosh^6(s/2)}
 \le\frac{\pi^9e^{s/2}}{128V_4\cosh^{3/2}(s/2)},
 \qquad s\ge0.
\]
Applying the arithmetic--geometric mean inequality to $1$ and five
copies of $e^s/5$ gives $1+e^s=1+5(e^s/5)\ge6(e^s/5)^{5/6}$. Hence,
\[
 \frac{e^{s/2}}{\cosh^{3/2}(s/2)}
 =\frac{2^{3/2}e^{5s/4}}{(1+e^s)^{3/2}}
 \le\frac{5^{5/4}}{3^{3/2}}<\frac32.
\]
Using $V_4=1088$ from \eqref{wh:moment-values} and $\pi<22/7$ gives
$E_{4,3}(s)<3(22/7)^9/(256\cdot1088)<1/3$.
For $r\ge5$, pair the factors $9,\ldots,2r$ from the two ends:
\[
 \frac{(2r)!}{8!}
 =\prod_{k=0}^{r-5}(9+k)(2r-k)\ge(18r)^{r-4}.
\]
Each pair is at least $18r$, since
$(9+k)(2r-k)=18r+k(2r-9-k)$ and $0\le k\le r-5$.
Using \eqref{wh:odd-moments} and \eqref{wh:error-definitions},
the factorial bound compares $E_{r,3}(s)$ with $E_{4,3}(s)$ at fixed $s$:
\[
 \frac{E_{r,3}(s)}{E_{4,3}(s)}=\frac{V_4}{V_r}\Bigl(\frac{\rho^2}{1+e^{-s}}\Bigr)^{r-4} \le\frac{\sigma_4}{\sigma_r}\Bigl(\frac{5\pi^2}{72}\Bigr)^{r-4}<\frac54\Bigl(\frac{25}{36}\Bigr)^{r-4}
 \le\frac{125}{144}<1.
\]
The comparison uses $\rho^2\le5r$, $\pi^2<10$, $\sigma_r\ge1$,
and $\sigma_4<5/4$. To verify the last bound, telescope
$\sum_{k\ge1}(2k+1)^{-8}
 <\sum_{k\ge1}[4k(k+1)]^{-1}=1/4$.
This proves $E_{r,3}(s)<1/3$ for $r\ge5$ and completes the proof.
\end{proof}

We now combine these error bounds with the phase of the nearest pole pair
at the comparison points.

\begin{lemma}\label{wh:central}
Let $r\ge4$ and $1\le j\le r-1$ be integers. Put
\[
 \theta_j=\frac{\pi(j+3/2)}{2r+1},\qquad
 s=\pi\cot\theta_j,\qquad \rho=\sqrt{s^2+\pi^2}.
\]
If $\rho^2\le5r$, then
\[
 (-1)^jh_{r,0}(-e^{-s})>|e^{-s}B_{r-1}(-e^{-s})|.
\]
\end{lemma}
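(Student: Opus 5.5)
The plan is to insert the partial-fraction formula \eqref{wh:real-pf} into the representation \eqref{wh:logistic}. At the chosen $s$ this isolates the nearest conjugate pole pair, whose phase has been fixed in advance. The remaining poles, the constant term $c_{r,r}$ and the wheel correction should then be exactly the three normalized errors $E_{r,1},E_{r,2},E_{r,3}$ of \eqref{wh:error-definitions}, which \autoref{wh:error-bounds} controls because $\rho^2\le5r$.

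\emph{Step 1 (pole expansion).} Put $z_k=s+\mathrm{i}(2k+1)\pi$. For $i<r$ the derivative order $m=2r-2i$ is even and at least two, so \eqref{wh:real-pf} and \eqref{wh:coefficient-normalization} give
\[
\sum_{i=0}^{r-1}c_{r,i}g^{(2r-2i)}(s)=-2V_r\operatorname{Im}\sum_{k\ge0}(-1)^k z_k^{-(2r+1)}F_r(z_k^2).
\]
For $k=0$ we have $z_0=\rho e^{\mathrm{i}\theta_j}$ and $e^{-\mathrm{i}(2r+1)\theta_j}=(-1)^j\mathrm{i}$. Hence the $k=0$ contribution equals $(-1)^j\rho^{-(2r+1)}\operatorname{Re}F_r(z_0^2)$.

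\emph{Step 2 (normalization).} Multiply \eqref{wh:logistic} by $(-1)^j$ and divide by the positive factor $N:=2V_r\rho^{-(2r+1)}(1+e^{-s})^{2r+1}e^{3s/2}$. This gives
\[
\frac{(-1)^jh_{r,0}(-e^{-s})}{N}=\operatorname{Re}F_r(z_0^2)+R_1+R_2 .
\]
Here $R_1$ collects the poles with $k\ge1$. Since $\widehat c_{r,i}\ge0$ and $|z_k|^2=y_k$, it satisfies $|R_1|\le\sum_{k\ge1}\sum_i\widehat c_{r,i}\,y_k^i(\rho^2/y_k)^{r+1/2}=E_{r,1}(s)$. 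The term $R_2$ comes from $c_{r,r}g(s)$ with $c_{r,r}=-1/(2\cdot4^r)$ and $g=1/(2\cosh(s/2))$, so $|R_2|=E_{r,2}(s)$.

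For the wheel correction, \eqref{wh:binomial-bound} with $x=e^{-s}$ gives $|e^{-s}B_{r-1}(-e^{-s})|\le e^{-s}(1+e^{-s})^{r-1}$. Dividing by $N$ yields exactly $E_{r,3}(s)$. It therefore suffices to prove
\[
\operatorname{Re}F_r(z_0^2)>E_{r,1}(s)+E_{r,2}(s)+E_{r,3}(s).
\]

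\emph{Step 3 (main term).} Write $\operatorname{Re}F_r(z_0^2)\ge1+\widehat c_{r,1}\rho^2\cos2\theta_j-\sum_{i\ge2}\widehat c_{r,i}\rho^{2i}$. Then use \autoref{wh:coefficients} with $\tau:=\rho^2/(5r)\le1$. This gives $\widehat c_{r,1}\rho^2\le\frac{5}{17}\tau$ and $\sum_{i\ge2}\widehat c_{r,i}\rho^{2i}\le\frac{5}{32}\tau^2/(1-\tfrac38\tau)\le\frac14\tau^2$. In the worst case this leaves $1-\frac5{17}-\frac14\approx0.456$.

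\emph{Main obstacle.} The crude totals $\frac16+\frac15+\frac13$ from \autoref{wh:error-bounds} exceed $0.456$, so Step 3 cannot simply be combined with them. This final comparison is where I expect the real work, and I would close the gap by exploiting scaling in $\tau$. The error terms $E_{r,2}$ and $E_{r,3}$ carry the factor $\rho^{2r+1}$, so they scale like $\tau^{r+1/2}$. Meanwhile the losses in the main term are only $O(\tau)$.

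First, I would redo the proofs of the $E_{r,2},E_{r,3}$ bounds keeping the factor $\tau^{r+1/2}$ (with $r\ge4$), and bound $E_{r,1}$ similarly with its $\rho$-dependence kept. Second, I would use the sign of $\cos2\theta_j$. When $\theta_j\ge\pi/4$ the linear term is nonnegative and only the quadratic loss remains. When $\theta_j<\pi/4$, the estimate \eqref{wh:pole-range} gives $r\sin^2\theta_j>2$ and hence $\tau\le\pi^2/10$ strictly, which shrinks every term. If that still falls short near $\tau\approx1$, I would sharpen \autoref{wh:error-bounds} directly. In particular $E_{4,3}$ and $\zeta_4$ are far below their stated bounds, and the $r$-monotonicity already proved transfers such a sharper value at $r=4$ to all $r\ge4$.
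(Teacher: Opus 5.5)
Your Steps 1 and 2 match the paper's reduction: the nearest pole pair yields $\operatorname{Re}F_r(z_0^2)$, and the far poles, the constant term $c_{r,r}$, and the wheel correction are bounded by exactly $E_{r,1},E_{r,2},E_{r,3}$. The gap is in Step 3. You never prove the final inequality $\operatorname{Re}F_r(z_0^2)>E_{r,1}+E_{r,2}+E_{r,3}$, and the obstacle you identify comes from your own bound. You bound the linear term by $-\tfrac5{17}\tau$, which treats $\cos2\theta_j$ and $\rho^2$ as independent. They are linked:
\[
\rho^2\cos2\theta_j=\operatorname{Re}(z_0^2)=s^2-\pi^2=\rho^2-2\pi^2 .
\]
So the linear term is negative only when $\rho^2<2\pi^2$ (that is, $\theta_j>\pi/4$). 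Its loss is then at most $\widehat c_{r,1}\pi^2\le\pi^2/(17r)<5/34$, and it is positive when $\rho^2$ is near $5r$. Keep the linear term exactly and use your (correct) tail bound $\sum_{i\ge2}\widehat c_{r,i}\rho^{2i}\le\rho^4/(100r^2)$. This gives
\[
\operatorname{Re}F_r(z_0^2)\ge1+\widehat c_{r,1}(\rho^2-2\pi^2)-\frac{\rho^4}{100r^2},
\]
which is concave in $\rho^2\in[\pi^2,5r]$. At $\rho^2=\pi^2$ it exceeds $1-\tfrac5{34}-\tfrac1{16}>\tfrac34$. At $\rho^2=5r$ it equals $\tfrac34+\widehat c_{r,1}(5r-2\pi^2)>\tfrac34$, since $\widehat c_{r,1}>0$ and $5r\ge20>2\pi^2$. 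Hence $\operatorname{Re}F_r(z_0^2)>\tfrac34>\tfrac16+\tfrac15+\tfrac13$. The error bounds of \autoref{wh:error-bounds}, exactly as stated, then close the argument with margin $\tfrac1{20}$. No rescaling in $\tau$ and no sharpening of the error estimates is needed.

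The remedies you sketch would not work as written.
\begin{itemize}
\item For $\theta_j\ge\pi/4$ one has $\cos2\theta_j\le0$, so the linear term is nonpositive there, not nonnegative; your two cases are reversed.
\item The bound $r\sin^2\theta_j>2$ behind \eqref{wh:pole-range} uses $r<(j+1)(j+2)$. That is not a hypothesis of this lemma, which assumes only $\rho^2\le5r$.
\item The slack in the error lemma is thin, not large: $\zeta_4\approx0.17$ against $\tfrac15$, and $E_{4,3}(s)$ reaches about $0.27$ near $s\approx0.9$ against $\tfrac13$.
\end{itemize}
Closing the gap between your $0.456$ and the total $0.7$ by sharpening those bounds would therefore need substantial new estimates, which you have not supplied.
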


\begin{proof}
The chosen angle fixes the phase of the highest-order pole term:
\[
 s+\mathrm{i}\pi=\rho e^{\mathrm{i}\theta_j},\qquad e^{-\mathrm{i}(2r+1)\theta_j}=(-1)^j\mathrm{i}.
\]
Substitute \eqref{wh:real-pf} into \eqref{wh:logistic} and divide by the positive factor $ \frac{2V_r(1+e^{-s})^{2r+1}e^{3s/2}}{\rho^{2r+1}}$. By \eqref{wh:coefficient-normalization} and \eqref{wh:coefficient-polynomial}, multiplication by $(-1)^j$ and the phase identity give
\[
 (-1)^j\rho^{2r+1}\operatorname{Im} \sum_{i=0}^{r-1}\frac{\widehat c_{r,i}}{(s+\mathrm{i}\pi)^{2r-2i+1}}=\operatorname{Re}\sum_{i=0}^{r-1}
 \widehat c_{r,i}(s+\mathrm{i}\pi)^{2i}=\operatorname{Re}F_r((s+\mathrm{i}\pi)^2).
\]
With the error terms defined in \eqref{wh:error-definitions},
the remaining pole terms have total modulus at most $E_{r,1}(s)$,
and the constant term has modulus $E_{r,2}(s)$.
By \eqref{wh:binomial-bound}, the normalized wheel correction is at most
$E_{r,3}(s)$. Thus,
\begin{equation}\label{wh:normalized-comparison}
 \frac{\rho^{2r+1}\bigl((-1)^jh_{r,0}(-e^{-s})
       -|e^{-s}B_{r-1}(-e^{-s})|\bigr)}
       {2V_r(1+e^{-s})^{2r+1}e^{3s/2}}
 \ge\operatorname{Re}F_r((s+\mathrm{i}\pi)^2)
     -E_{r,1}(s)-E_{r,2}(s)-E_{r,3}(s).
\end{equation}
In view of \eqref{wh:error-estimates}, it suffices to prove
$\operatorname{Re}F_r((s+\mathrm{i}\pi)^2)>3/4$.

Since $\rho^2\le5r$, summing the geometric majorant in
\eqref{wh:coefficient-bounds} gives
\begin{equation}\label{wh:coefficient-tail}
 \sum_{i=2}^{r-1}\widehat c_{r,i}\rho^{2i}
 \le\frac5{32}\frac{(\rho^2/(5r))^2}{1-3\rho^2/(40r)}
 \le\frac{\rho^4}{100r^2}.
\end{equation}
Retain the linear term of $F_r$ and apply \eqref{wh:coefficient-tail}
to the remaining terms:
\begin{equation}\label{wh:nearest-pole-bound}
 \begin{aligned}
 \operatorname{Re}F_r((s+\mathrm{i}\pi)^2)
 &=1+\widehat c_{r,1}(\rho^2-2\pi^2)
   +\sum_{i=2}^{r-1}\widehat c_{r,i}
       \operatorname{Re}\bigl((s+\mathrm{i}\pi)^{2i}\bigr)\\
 &\ge1+\widehat c_{r,1}(\rho^2-2\pi^2)-\frac{\rho^4}{100r^2}.
 \end{aligned}
\end{equation}
Here $\operatorname{Re}((s+\mathrm{i}\pi)^2)=\rho^2-2\pi^2$.
The bound in \eqref{wh:nearest-pole-bound} is concave in $\rho^2$,
so we check its values at $\pi^2$ and $5r$.
Since $r\ge4$ and $\pi^2<10$, \eqref{wh:coefficient-bounds}
and $\widehat c_{r,1}>0$ give
\[
 1-\widehat c_{r,1}\pi^2-\frac{\pi^4}{100r^2}>1-\frac5{34}-\frac1{16}>\frac34,\qquad \frac34+\widehat c_{r,1}(5r-2\pi^2)>\frac34.
\]
Finally, \eqref{wh:nearest-pole-bound} and \eqref{wh:error-estimates}
make the lower bound in \eqref{wh:normalized-comparison} positive:
\[
 \operatorname{Re}F_r((s+\mathrm{i}\pi)^2)
     -E_{r,1}(s)-E_{r,2}(s)-E_{r,3}(s)
 >\frac34-\frac16-\frac15-\frac13=\frac1{20}>0.\qedhere
\]
\end{proof}

\subsection{Proofs of main results}\label{wh:completion}

The two estimates verify \eqref{wh:sign-comparison} in complementary
ranges of $j$. We now check that their comparison points join in the
order required by \autoref{lem:sign-criterion}.

\begin{proof}[Proof of \autoref{thm:crown} and \autoref{thm:wheel-whirl}]
The identity \eqref{eq:crown-whirl} was proved in
\autoref{sec:matroids}, and the case $r=3$ was settled in
\autoref{wh:sign-criterion}.
Fix $r\ge4$. For $1\le j\le r-1$, define
\begin{equation}\label{wh:nodes}
 s_j=\begin{cases}
 2r\log((j+2)/(j+1)),&r\ge(j+1)(j+2),\\
 \pi\cot\bigl(\pi(j+3/2)/(2r+1)\bigr),&r<(j+1)(j+2),
 \end{cases}
 \qquad t_j=-e^{-s_j}.
\end{equation}
\autoref{wh:interior-endpoint} gives the sign comparison in the first
range; in the second, \eqref{wh:pole-range} verifies the hypothesis of
\autoref{wh:central}. Thus, \eqref{wh:sign-comparison} holds for all
$1\le j\le r-1$.

Next, both expressions for $s_j$ decrease strictly with $j$, and the
first range in \eqref{wh:nodes} is an initial segment.
The last index lies in the second range and gives $s_{r-1}=0$.
If the first range is nonempty, let $j$ be its last index and put
$u=j+3/2$. Then $j\le r-2$, and
\[
 s_j=2r\log\frac{u+1/2}{u-1/2}
 >\frac{2r}{u}>\frac{2r+1}{u+1}
 >\pi\cot\frac{\pi(u+1)}{2r+1}=s_{j+1}.
\]
The first inequality follows from $\log((1+y)/(1-y))>2y$ with
$y=1/(2u)\in(0,1)$. The other two follow from
$u\le r-1/2<2r$ and $\cot v<1/v$ for $0<v\le\pi/2$.
Therefore, $s_1>\cdots>s_{r-1}=0$.
Since $s\mapsto-e^{-s}$ is strictly increasing, we obtain
$0>t_1>\cdots>t_{r-1}=-1$.
\autoref{lem:sign-criterion} now yields $2r-2$ simple negative roots
of $h_{r,\lambda}$ for every $\lambda\in[0,1]$.
Taking $\lambda=0$ and using \eqref{eq:crown-whirl} gives
\autoref{thm:crown}.
\end{proof}

\begin{proof}[Proof of \autoref{cor:strict-ulc}]
Put $d=2r-2$ and $a_i=h_i(\lambda)/\binom di$ for $0\le i\le d$.
\autoref{thm:wheel-whirl} and $h_{r,\lambda}(0)=1$ imply that
all coefficients are positive.
To isolate the three coefficients in the inequality at an index
$1\le i\le d-1$, differentiate $i-1$ times, reverse the resulting
polynomial of degree $d-i+1$, and differentiate $d-i-1$ times:
\[
 \frac{d^{\,d-i-1}}{dt^{\,d-i-1}}
 \left[t^{d-i+1}h_{r,\lambda}^{(i-1)}(1/t)\right]
 =\frac{d!}{2}\bigl(a_{i-1}t^2+2a_it+a_{i+1}\bigr).
\]
Rolle's theorem places the roots of each derivative strictly between
successive roots, while reversal takes their reciprocals.
Hence, the resulting quadratic has two distinct negative roots,
and its discriminant satisfies
\[
 (d!)^2(a_i^2-a_{i-1}a_{i+1})>0.\qedhere
\]
\end{proof}

Taking $\lambda=0$ and $\lambda=1$ in \autoref{thm:wheel-whirl}
and \autoref{cor:strict-ulc} yields the following corollary.

\begin{corollary}\label{cor:wheel-whirl}
For every integer $r\ge3$, the $h^*$-polynomials of the rank-$r$ wheel
and whirl base polytopes have $2r-2$ simple negative roots.
Their coefficient sequences are positive, unimodal, and strictly
ultra log-concave of order $2r-2$.
\end{corollary}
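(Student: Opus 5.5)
The corollary is a direct specialization of \autoref{thm:wheel-whirl} and \autoref{cor:strict-ulc} to the two endpoints of the segment, $\lambda=1$ and $\lambda=0$. I plan to identify the endpoints, transfer the root statement, and then transfer the coefficient statement.

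First, identify the endpoints. By \eqref{eq:interpolation}, $h_{r,1}=h^*_{P(M_r)}$ is the wheel numerator and $h_{r,0}=h^*_{P(N_r)}$ is the whirl numerator. Here $M_r$ is the cycle matroid of the wheel $W_r$ and $N_r$ is the rank-$r$ whirl, so these are the base polytopes named in the statement.

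Second, the root statement. \autoref{thm:wheel-whirl} applies for every $\lambda\in[0,1]$, so in particular at $\lambda=0$ and $\lambda=1$. It says each $h_{r,\lambda}$ has degree $2r-2$ and all of its roots are simple and negative. Since the degree is $2r-2$, there are exactly $2r-2$ such roots.

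Third, the coefficient statement. \autoref{cor:strict-ulc} at $\lambda\in\{0,1\}$ gives positivity of every coefficient $h_i(\lambda)$ for $0\le i\le 2r-2$. It also gives the strict ultra log-concavity inequalities of order $2r-2$. Since the sequence is positive, it has no internal zeros, as the definition requires. Unimodality follows as the introduction notes: the normalized sequence $a_i=h_i/\binom{2r-2}{i}$ is log-concave and positive, so its consecutive ratios $a_{i+1}/a_i$ are nonincreasing. The binomial ratios $\binom{2r-2}{i+1}/\binom{2r-2}{i}$ are also decreasing. Hence the ratios $h_{i+1}/h_i$ are decreasing in $i$, and the sequence rises and then falls.

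There is no real obstacle. The only point needing care is the unimodality step, which is not stated explicitly in \autoref{cor:strict-ulc} and has to be derived from strict ultra log-concavity and positivity as above.
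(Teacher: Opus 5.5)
Your proposal is correct and matches the paper, which obtains this corollary simply by setting $\lambda=0$ (whirl) and $\lambda=1$ (wheel) in \autoref{thm:wheel-whirl} and \autoref{cor:strict-ulc}. Your explicit derivation of unimodality, via the strictly decreasing ratios $h_{i+1}/h_i$, is the standard consequence of positivity and ultra log-concavity that the paper notes in its introduction.
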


\section{Even cycles, products, and dilations}\label{sec:closure}

An affine equivalence transfers the crown theorem to stable-set
polytopes of even cycles. Two preservation theorems then extend the
results to products and dilations, including base polytopes of
matroid direct sums.

For a finite simple graph $G$, a \emph{stable set} is a vertex subset containing no
pair of adjacent vertices. The \emph{stable-set polytope}
$\operatorname{STAB}(G)$ is the convex hull of the indicator vectors
of the stable sets of $G$.

Symmetry and unimodality for the stable-set polytopes of even cycles follow
from the cyclic block construction of Jiang, Yang, and Zhong
\cite[Theorem~1.5]{JiangYangZhong}, with block size one and cycle
length $2r$.
The following specialization of Stanley's affine equivalence
\cite[Theorem~2.3]{StanleyPosets} transfers the crown root result
to even cycles.

\begin{remark}\label{rem:even-cycle}

For $r\ge2$, label $C_{2r}$ cyclically by $1,\ldots,2r$ and consider
the affine involution
\[
 x\mapsto y=(x_1,1-x_2,\ldots,x_{2r-1},1-x_{2r}).
\]
For $x\in\{0,1\}^{2r}$, the crown inequalities in
\eqref{eq:crown-polytope} become
\[
 y_i+y_{i+1}\le1,\qquad 1\le i\le2r,\qquad y_{2r+1}=y_1.
\]
These inequalities say precisely that $\{i:y_i=1\}$ is a stable set
of $C_{2r}$. The map therefore bijects the $0/1$ points of
$\OO(\CC_{2r})$ with the stable-set indicators of $C_{2r}$.
By \cite[Corollary~1.3]{StanleyPosets}, the crown order polytope is
the convex hull of its $0/1$ points. Hence, taking convex hulls
identifies its image with $\operatorname{STAB}(C_{2r})$.
The map is an integral affine involution, so the equivalence is
unimodular and preserves Ehrhart polynomials. Consequently,
\begin{equation}\label{eq:cycle-crown}
 h^*_{\operatorname{STAB}(C_{2r})}(t)
   =h^*_{\OO(\CC_{2r})}(t).
\end{equation}
For $r\ge3$, the common polynomial has simple negative roots by
\autoref{thm:crown}; for $r=2$, it equals $(1+t)^2$ by
\eqref{eq:crown-rank-two}.
\end{remark}

For lattice polytopes, real-rootedness of $h^*$ is preserved under
Cartesian products \cite[Theorem~0.2]{Wagner} and positive integer
dilations \cite[Corollary~3.4]{Zhang}.

\begin{corollary}\label{cor:products}
Let $P=P_1\times\cdots\times P_s$, where $s\ge1$ is an integer. Each factor is either $\OO(\CC_{2r})$ or $\operatorname{STAB}(C_{2r})$ for an integer $r\ge2$, or $P(M_r)$ or $P(N_r)$ for an integer $r\ge3$. For every integer $k\ge1$, the polynomial $h^*_{kP}$ has only negative real roots, and its coefficient sequence is ultra log-concave of order $\deg h^*_{kP}$.
\end{corollary}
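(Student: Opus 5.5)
The plan is to reduce the statement to the single-factor results and then invoke the two preservation theorems cited just before it. First, each admissible factor has a real-rooted $h^*$-polynomial with only negative roots. For $\OO(\CC_{2r})$ with $r\ge3$ this is \autoref{thm:crown}, and for $r=2$ we have $(1+t)^2$ by \eqref{eq:crown-rank-two}. For $\operatorname{STAB}(C_{2r})$ it follows from \eqref{eq:cycle-crown} in \autoref{rem:even-cycle}. For $P(M_r)$ and $P(N_r)$ with $r\ge3$ it is \autoref{cor:wheel-whirl}. Since every $h^*$-polynomial has nonnegative coefficients and constant term $1$, the roots are negative reals and $0$ is not a root.

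Next, I will apply the product theorem \cite[Theorem~0.2]{Wagner} inductively on $s$. This shows that $h^*_P$ is real-rooted for the lattice polytope $P=P_1\times\cdots\times P_s$. The point to check is that Wagner's theorem concerns lattice polytopes of full dimension in their affine lattice. The base polytopes $P(M_r)$ and $P(N_r)$ are not full-dimensional in $\R^{E}$. However, the Ehrhart series, and hence $h^*$, is defined through lattice-point counts. Since the product's Ehrhart polynomial is the product of the factors' Ehrhart polynomials, the statement, which is really a statement about the Hadamard-type product of the series, applies regardless of the ambient dimension. I will phrase this step purely in terms of Ehrhart polynomials to avoid that issue.

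Then I will apply \cite[Corollary~3.4]{Zhang} to the lattice polytope $P$. This gives real-rootedness of $h^*_{kP}$ for every integer $k\ge1$. Nonnegativity of the coefficients \cite[Theorem~2.1]{StanleyNonnegative}, together with $h^*_{kP}(0)=1$, then forces every root to be negative.

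Finally, for ultra log-concavity, let $m=\deg h^*_{kP}$. The polynomial has positive leading coefficient and only real negative roots, with multiplicities allowed. Newton's inequalities \cite[Theorem~2]{StanleyLogConcavity} give $(a_i/\binom mi)^2\ge(a_{i-1}/\binom m{i-1})(a_{i+1}/\binom m{i+1})$. Absence of internal zeros holds because a polynomial $\prod(t+\alpha_\ell)$ with all $\alpha_\ell>0$ has all coefficients positive up to degree $m$. The main obstacle is only the dimension bookkeeping in the product step; I expect no genuine difficulty there, since the $h^*$ numerators are intrinsic to the lattice-point counts. Unlike \autoref{cor:strict-ulc}, strictness is not claimed, because products may create repeated roots.
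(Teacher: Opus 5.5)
Your proposal is correct and follows essentially the same route as the paper. Both arguments run as follows:
\begin{enumerate}
\item Each factor has a real-rooted $h^*$-polynomial, by \autoref{thm:crown}, \autoref{cor:wheel-whirl}, \eqref{eq:crown-rank-two}, and \eqref{eq:cycle-crown}.
\item Wagner's theorem, applied to the product of Ehrhart polynomials (equivalently, the coefficientwise product of the Ehrhart series), gives real-rootedness of $h^*_P$.
\item Zhang's Corollary~3.4 gives real-rootedness of $h^*_{kP}$ for every dilation.
\item Nonnegative coefficients with constant term $1$ force the roots to be negative, and Newton's inequalities then give ultra log-concavity.
\end{enumerate}

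Your worry about full-dimensionality is harmless, as you suspected. The polynomial $h^*$ is defined with denominator $(1-t)^{\dim P+1}$, the degree of $\ehr(P,q)$ equals $\dim P$, and dimensions add under Cartesian products, so ambient dimension plays no role.
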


\begin{proof}
\autoref{thm:crown} and \autoref{cor:wheel-whirl}, the case $r=2$ in
\eqref{eq:crown-rank-two}, and \eqref{eq:cycle-crown} show that
each factor has a real-rooted $h^*$-polynomial.
Lattice points in a Cartesian product are tuples of lattice points
in its factors, so
\begin{equation}\label{eq:ehrhart-product}
 \ehr(P,q)=\prod_{j=1}^s\ehr(P_j,q),\qquad q\in\Z_{\ge0}.
\end{equation}
By \eqref{eq:ehrhart-product}, the Ehrhart series of $P$ is the
coefficientwise product of the factor Ehrhart series.
Hence, successive application of Wagner's theorem \cite[Theorem~0.2]{Wagner}
shows that $h_P^*$ is real-rooted. Its coefficients are nonnegative and
$h_P^*(0)=1$, so all its roots are negative.

Next, fix an integer $k\ge1$. Since $q(kP)=(kq)P$, \eqref{eq:ehrhart-series} gives
\begin{equation}\label{eq:ehrhart-dilation}
 \frac{h_{kP}^*(t)}{(1-t)^{\dim P+1}}
 =\sum_{q\ge0}\ehr(kP,q)t^q
 =\sum_{q\ge0}\ehr(P,kq)t^q.
\end{equation}
The last series in \eqref{eq:ehrhart-dilation} selects the coefficients at multiples of $k$ from
the Ehrhart series of $P$. Zhang's Corollary~3.4 \cite{Zhang}
applies because $h_P^*$ has nonnegative coefficients and only
negative real roots. Hence, $h_{kP}^*$ is real-rooted.
Its nonnegative coefficients and constant term $1$ exclude
nonnegative roots. Factoring over its negative roots gives a positive
coefficient sequence, which is ultra log-concave of order
$\deg h_{kP}^*$ by Newton's inequalities
\cite[Theorem~2]{StanleyLogConcavity}.
\end{proof}

For matroids $M,N$ on disjoint ground sets, the bases of $M\oplus N$
are the unions of a basis of $M$ and a basis of $N$.
Taking convex hulls of the basis indicators yields
\begin{equation}\label{eq:base-direct-sum}
 P(M\oplus N)=P(M)\times P(N).
\end{equation}
Iterating \eqref{eq:base-direct-sum} and applying \autoref{cor:products}
gives the same conclusions for base polytopes of finite direct sums of
wheel and whirl matroids, each of rank at least three, and all their
positive integer dilates.

For cycle matroids, the same conclusion applies to graphs obtained
from wheels by repeatedly taking disjoint unions or identifying a pair
of vertices, one from each of two disjoint graphs. Neither operation creates a cycle
using edges from both graphs. Thus, the cycle matroid of the resulting
graph is the direct sum of the two original cycle matroids.
Adding a bridge or a loop adds a base-polytope coordinate fixed at
$1$ or $0$, respectively. Deleting the added coordinate gives a
bijection on lattice points in every nonnegative integer dilation,
so the Ehrhart polynomial is preserved.

\section*{Acknowledgements}
This work is supported by the Guangdong Basic and Applied Basic Research
Foundation (Grant No. 2026A1515012237).

\section*{Use of AI}
OpenAI's Codex assisted in deriving the dominant-eigenvalue and
conjugate-pole estimates and controlling the error terms in Section~4,
and in polishing the English. The authors take full responsibility
for the content of this paper.

\end{document}